\documentclass[final,onefignum,onetabnum]{siamart171218}
\pdfoutput=1
\usepackage{subfigure}

\usepackage{graphicx}
\usepackage{url}
\usepackage{bm}
\usepackage{paralist}

\usepackage{algorithm}
\usepackage{algorithmic}        
\usepackage{multirow}

\newcommand{\e}{\begin{equation}}
\newcommand{\ee}{\end{equation}}
\newcommand{\en}{\begin{equation*}}
\newcommand{\een}{\end{equation*}}
\newcommand{\eqn}{\begin{eqnarray}}
\newcommand{\eeqn}{\end{eqnarray}}
\newcommand{\bmat}{\begin{bmatrix}}
\newcommand{\emat}{\end{bmatrix}}
\newcommand{\BIT}{\begin{itemize}}
\newcommand{\EIT}{\end{itemize}}
\newcommand{\triangleqtemp}{\stackrel{\bigtriangleup}{=}}




\newcommand{\argmin}{\mathop{\rm argmin}}




\newcommand{\va}{\bm a}

\newcommand{\vd}{\bm d}
\newcommand{\ve}{\bm e}
\newcommand{\vf}{\bm f}

\newcommand{\vr}{\bm r}

\newcommand{\vu}{\bm u}
\newcommand{\vv}{\bm v}

\newcommand{\vx}{\bm x}

\newcommand{\valpha}{\bm \alpha}

\newcommand{\vdelta}{\bm\delta}

\newcommand{\vtheta}{\bm \theta}

\newcommand{\mA}{\bm A}

\newcommand{\mE}{\bm E}

\newcommand{\mI}{\bm I}

\newcommand{\mM}{\bm M}

\newcommand{\mP}{\bm P}

\newcommand{\mUpsilon}{\bm \Upsilon}


\newcounter{oursection}

\usepackage{soul}

\usepackage{lipsum}
\usepackage{amsfonts}
\usepackage{graphicx}
\usepackage{epstopdf}
\usepackage{algorithmic}
\ifpdf
  \DeclareGraphicsExtensions{.eps,.pdf,.png,.jpg}
\else
  \DeclareGraphicsExtensions{.eps}
\fi


\newsiamremark{remark}{Remark}
\newsiamremark{hypothesis}{Hypothesis}
\crefname{hypothesis}{Hypothesis}{Hypotheses}
\newsiamthm{claim}{Claim}

\headers{Merging Multigrid Optimization with SESOP}{T. Hong \& I. Yavneh \& M. Zibulevsky}

\title{Merging Multigrid Optimization with SESOP\thanks{Submitted to the editors DATE.
\funding{This work was funded by ****}}}

\author{Tao Hong, Irad Yavneh, and Michael Zibulevsky}

\usepackage{amsopn}


\usepackage{subfigure}
\usepackage{diagbox}
\renewcommand\qedsymbol{\hfill $\square$}
\usepackage{tikz}
\usetikzlibrary{arrows,shapes,snakes,automata,backgrounds,petri}
\usepackage[latin1]{inputenc}
\ifpdf
\hypersetup{
  pdftitle={Merging Multigrid Optimization with SESOP},
  pdfauthor={T. Hong, I. Yavneh, and M. Zibulevsky}
}
\fi


\externaldocument{ex_supplement}

\begin{document}
\maketitle
\vspace*{0.1in} 
{
\begin{abstract}
A merger of two optimization frameworks is introduced: SEquential Subspace OPtimization (SESOP) with MultiGrid (MG) optimization. At each iteration of the algorithm, the search direction implied by the coarse-grid correction process of MG is added to the low dimensional search-space of SESOP, which includes the preconditioned gradient and search directions involving the previous iterates, called {\em history}. Numerical experiments demonstrate the effectiveness of this approach. We then study the asymptotic convergence factor of the two-level version of SESOP-MG (dubbed SESOP-TG) for optimization of quadratic functions, and derive approximately optimal fixed parameters, which may reduce the computational overhead for such problems significantly. 
\end{abstract}

\begin{keywords}
 SESOP, acceleration, multigrid, optimization, linear and nonlinear large-scale problems
\end{keywords}

\begin{AMS}
  90C06, 90C25, 65N55, 65K05, 65N22, 65H10
\end{AMS}

\section{Introduction}
\label{sec:intro}
Multigrid (MG) methods are widely considered to be an efficient approach for solving elliptic partial differential equations (PDEs) and systems, as well as other problems which can be effectively represented on a hierarchy of grids or levels \cite{brandt1977multi,briggs2000multigrid,stuben2001introduction,xu2017algebraic}. However, it is often challenging to design efficient stand-alone MG methods for difficult problems, and therefore MG methods are often used in combination with acceleration techniques (e.g., \cite{oosterlee2000krylov}). 

In this paper we study MG in an optimization framework and seek robust solution methods by merging this approach with so-called SEquential Subspace OPtimization (SESOP) \cite{zibulevsky2013speeding}. SESOP is a general framework for iteratively solving large-scale optimization problems, as described in the next section. The combined  framework is called SESOP-MG, and its two-grid (TG) version is called SESOP-TG. 
We then analyze the asymptotic convergence factor (ACF) of a fixed-stepsize version of SESOP-TG for quadratic optimization problems, and estimate the expected acceleration due to SESOP by means of the so-called $h$-ellipticity measure \cite{Bra84,trottenberg2000multigrid}. Resorting to local Fourier analysis (LFA) \cite{brandt2011multigrid,wienands2004practical}, we propose two  methods to estimate optimal fixed stepsizes of SESOP-TG for quadratic optimization problems cheaply in cases where LFA is applicable. Numerical tests demonstrate the relevance of the theoretical analysis in practice.

The paper is organized as follows. The standard TG and SESOP algorithms are briefly described in the remainder of this section. The merged SESOP-TG/MG algorithm is proposed and tested in \Cref{sec:SESOPMGMerge}. An analysis of the ACF for a fixed-stepsize version of the SESOP-TG method for quadratic problems is presented in \Cref{sec:ConvFactLinear}. There, we also show how to estimate the optimal fixed stepsizes cheaply for some specific problems. Numerical tests validate our analysis and the effectiveness of the proposed method for estimating the optimal fixed stepsizes. Conclusions are drawn in \Cref{sec:conclusion}.


We adopt the following notation. $\vx$ denotes the unknown solution vector, and $F(\vx)$ a convex function we aim to minimize. In the two-grid case, we use superscripts $h$ and $H$ to denote the fine and coarse grid, e.g., $F^h(\vx^h)$ and $F^H(\vx^H)$ denote the fine and coarse functions, respectively. 
In general, we use boldface font to denote vectors and matrices, and $^\mathcal T$ denotes the transpose operator.  


\subsection{Multigrid (MG)}\label{sec:subsec:pre:MG}

We consider MG as a method for convex optimization. For an extensive review of MG for PDE optimization, see Borz\`{i} and Schulz \cite{borzi2009multigrid} and references therein. Several authors developed MG optimization for specific problems in the 1990's, and at the end of the decade Nash \cite{nash2000multigrid} formulated MG as a general optimization framework called MG/OPT based directly on the well-known full approximation storage (FAS) scheme of Brandt \cite{brandt1977multi}. Following this, Lewis and Nash applied this framework to systems governed by differential equations \cite{lewis2005model}. In similar vein, Wen and Goldfard proposed a line search MG method to solve unconstrained convex and nonconvex problems \cite{wen2009line}. Toint et al. merged MG optimization with the trust region approach, applying MG to the series of linear subproblems arising in each step of trust region methods. This was applied to nonlinear convex or nonconvex problems, including bound constraints \cite{gratton2008recursiveIMA,gratton2008recursive,toint2009multilevel,gratton2010numerical}. Recently, Calandra et al.  applied MG optimization to reduce the cost of step computation in high-order optimization \cite{calandra2021high}.

Consider the following unconstrained problem defined on the fine-grid: 
\e 
\vx_*^h = \argmin_{\vx^h\in\mathbb{R}^N} F^h(\vx^h) \, , \label{eq:uncons:fine:TG} 
\ee
where $ F^h(\vx^h): \mathbb{R}^N\rightarrow \mathbb R$ is a convex and differentiable function and the solution set of \eqref{eq:uncons:fine:TG} is not empty. We describe a single iteration of the two-grid algorithm next, with MG obtained by straightforward recursion. Denote by $\vx_k^h$ the approximation to the fine-grid solution $\vx_*^h$ after the $k$th iteration. Assume that we have defined two full-rank operators, a {\em restriction} $\mI_h^H: \mathbb{R}^N \rightarrow \mathbb{R}^{N_c}$, and a {\em prolongation} $\mI_H^h: \mathbb{R}^{N_c} \rightarrow \mathbb{R}^N$, where $N_c$ is the size of the coarse-grid.  Furthermore, let $\vx_k^H$ denote a coarse-grid approximation to $\vx_k^h$ (for example, we may use $\vx_k^H = \mI_h^H \vx_k^h$, but other choices may be used as well). The coarse-grid 
problem is then defined as follows:
\e
\vx_*^{H} = \argmin_{\vx^H\in\mathbb{R}^{N_c}} F^H(\vx^H)-\vv_k^\mathcal T\vx^H,\label{eq:coarse:TG}
\ee
where $F^H$ is a coarse approximation to $F^h$, and $\vv_k=\nabla F^H(\vx^H_k)-\mI_h^H \nabla F^h(\vx^h_k)$. The correction term $\vv_k$, adapted from FAS, is used to enforce the same  
first-order optimality condition \cite{nash2000multigrid} on the fine and coarse levels. After solving \eqref{eq:coarse:TG} (exactly in TG, and approximately and recursively in MG), the coarse-grid correction CGC direction is defined by
\e
\vd_k^h = \mI_H^h (\vx_*^H - \vx_k^H). \label{eq:CGC}
\ee
Finally, the CGC is added to the current fine-grid approximation:
\e
\vx^h_k \leftarrow \vx^h_k + \vd_k^h.\label{eq:CtoFC}
\ee
This may be followed by additional relaxation steps, yielding the $k+1$st approximation $\vx^h_{k+1}$. The two-grid algorithm is written in \Cref{alg:TG}.

\begin{algorithm}
\caption{Two-Grid (TG)}
\label{alg:TG}
\begin{algorithmic}[1]
\REQUIRE {Initial value $\vx_0^h$, convergence criterion $\epsilon$, the number of maximal iterations $\text{Max\_Iter}$, $\nu_1, \nu_2$ -- the number of pre- and post-relaxation steps, $k=1$.}
\lastcon {Solution $\vx_*^h.$}
\WHILE {$k \leq {Max\_Iter}$}
\STATE $\vx_k^h\leftarrow Relaxation(\vx_{k-1}^h,\nu_1)$.
\STATE {Evaluate the gradient $\nabla  F^h(\vx_k^h)$ and formulate $\vv_k$.}
\IF {$|\nabla  F^h(\vx_k^h)| \leq \epsilon$}
\STATE $\vx_*^h \leftarrow \vx_k^h$.
\RETURN
\ENDIF
\STATE Solve the coarse problem \eqref{eq:coarse:TG} to get $\vx_*^H$ and then $\vd_k^h\leftarrow\mI_H^h\left(\vx_*^H-\mI_h^H \vx_k^h\right)$. 
\STATE Update $\vx_k^h \leftarrow \vx_k^h+\vd_k^h$.\label{alg:TG:CGC}
\STATE $\vx_k^h\leftarrow Relaxation(\vx_k^h,\nu_2)$.
\STATE $k \leftarrow k+1$
\ENDWHILE
\STATE $\vx_*^h \leftarrow \vx_k^h$.
\RETURN 
\end{algorithmic}
\end{algorithm}


\subsection{SEquential Subspace OPtimization (SESOP)}\label{sec:subsec:pre:sesop}
SESOP \cite{narkiss2005sequential,elad2007coordinate,zibulevsky2010l1,zibulevsky2013speeding} is a framework for solving smooth large-scale unconstrained minimization problems such as \eqref{eq:uncons:fine:TG},
by sequential optimization over affine subspaces ${\cal{\bm M}}_k$, spanned by the current descent direction (typically preconditioned gradient) and $m$ previous propagation directions of the method. If $F^h(\vx^h)$ is convex, SESOP yields the optimal worst-case convergence factor of $\mathcal O(\frac{1}{k^2})$ \cite{narkiss2005sequential}, while achieving efficiency of the quadratic Conjugate Gradients (CG) method when the problem is close to quadratic or in the vicinity of the solution.

The affine subspace at the $k$th iteration is defined by
\en
{\cal {\bm M}}_k^h = \left\{\bm x_k^h+\bm P_k^h\bm \alpha: \bm\alpha\in \mathbb{R}^{m+1}  \right\},
\een
where $\vx_k^h$ is the $k$th iterate, the matrix $\bm P_k^h$ contains the spanning directions in its columns, the preconditioned gradient $\mathcal {\mP}\nabla F^h(\vx_k^h)$ and $m$ last steps $\vdelta_i^h = \vx_i^h - \vx_{i-1}^h$,
\e
\bm P_k^h=\left[\mathcal {\mP}\nabla F^h(\vx_k^h), \vdelta_{k}^h, \vdelta_{k-1}^h,\cdots,\vdelta_{k-m+1}^h \right],
~~ m\geq 0.\label{eq:sesop_affine_def1}
\ee
The new iterate $\bm x_{k+1}^h$ is obtained via optimization of  $F^h(\cdot)$ over the current subspace ${\cal{\bm M}}_k^h$,
\e
\left.\begin{array}{rcl}
\bm \alpha_k^*&=&\argmin_{\bm\alpha\in \mathbb R^{m+1}} F^h(\bm x_k^h+\bm 
P_k^h\bm\alpha),\\
\bm x_{k+1}^h&=&\bm x_k^h+\bm P_k^h \bm\alpha_k^*.\label{eq:SESOP:affine:optimize}
\end{array}\right.
\ee
By keeping the dimension of $\valpha$ low,  \eqref{eq:SESOP:affine:optimize} can be solved efficiently with a Newton-type method. SESOP is relatively efficient if $(i)$ the objective function can be represented as $F^h(\mathcal\mA\vx^h)$; $(ii)$ the evaluation of $F^h(\cdot)$ is cheap, but calculating  $\mathcal\mA\vx^h$ is expensive.
%
%
In this case, we can avoid re-computation of  $\mathcal\mA\vx_k^h$ during the subspace minimization by storing $\mathcal\mA\mP_k^h$ and $\mathcal\mA\vx_k^h$.
A typical class of problems of this type is the well-known $\ell_1-\ell_2$ optimization \cite{zibulevsky2010l1}.

SESOP can yield faster convergence if more efficient descent directions are added to the subspace ${\cal{\mM}}_k^h$. This may include a cumulative or parallel coordinate descent step, a separable surrogate function or expectation-maximization step, Newton-type steps and  other methods \cite{elad2007coordinate,zibulevsky2010l1,zibulevsky2013speeding}. In this work we suggest adding the CGC direction provided by the MG framework.


\section{Merging multigrid with SESOP}
\label{sec:SESOPMGMerge}

\subsection{SESOP-TG} \label{sec:sub:SESOPTG}
We begin this section by introducing a two-grid version of our scheme, SESOP-TG, and later extend it to the multilevel version, SESOP-MG.  
In its basic form, the idea is to add the CGC $\vd_k^h$ of \eqref{eq:CGC} into the affine subspace ${\cal{\bm M}}_k$ by replacing $\mP_k^h$ with an augmented subspace $\bar{\mP}_k^h=\bmat \vd_k^h&\mP_k^h\emat$ and computing the locally optimal $\valpha$ to obtain the next iterate, $\vx_{k+1}^h$. Our intention is to combine the efficiency of MG that results from fast reduction of smooth error by the CGC, with the robustness of SESOP that results from the local optimization over the subspace. That is, even in difficult cases where the CGC is inadequate, the algorithm should still converge at least as fast as the standard SESOP, 
because an inefficient direction simply results in small (or conceivably even negative) coefficient. SESOP-TG is presented in \Cref{alg:SESOP-TG}. Note that we add the option of two additional steps to the usual SESOP algorithm, the pre- and post-relaxation at steps \ref{alg:SESOP-TG:pre} and \ref{alg:SESOP-TG:post} in \Cref{alg:SESOP-TG}, commonly applied in MG algorithms. This allows us to advance the solution with low computational expense whenever the coarse-grid direction is inefficient (due the the fact that previous use of the CGC direction greatly reduced the smooth error). For the relaxation we typically use nonlinear Jacobi or Gauss-Seidel applied to the gradient equation. The flowchart of \Cref{alg:SESOP-TG} is presented in \Cref{fig:flowchart:SESOP-TG-m}.

\begin{algorithm}[!htb]
\caption{SESOP-TG-$m$}
\label{alg:SESOP-TG}
\begin{algorithmic}[1]
\REQUIRE {Initial value $\bar\vx_0^h=\vx_0^h$, convergence criterion $\epsilon$, the number of maximal iterations $\text{Max\_Iter}$, the preconditioner $\mathcal {\mP}$ and $\nu_1, \nu_2$ -- the number of pre- and post-relaxation steps, $k=1$.}
\lastcon {$\vx_*^h.$}
\WHILE {$k \leq {Max\_Iter}$}
\STATE $ \bar{\bar\vx}_k^h\leftarrow Relaxation(\bar\vx_{k-1}^h,\nu_1)$.\label{alg:SESOP-TG:pre}
\STATE Evaluate the gradient $\nabla  F^h(\bar{\bar\vx}_k^h)$. 
\IF {$|\nabla  F^h(\bar{\bar\vx}_k^h)| \leq \epsilon$}
\STATE $\vx_*^h \leftarrow \bar{\bar\vx}_k^h$.
\RETURN
\ELSE
\STATE $\mP_k^h \leftarrow \bmat \mathcal {\mP}\nabla  F^h(\bar{\bar\vx}_k^h)&\bar{\bar\vx}_{k}^h-\vx_{k-2}^h&\cdots& \bar{\bar\vx}_{k-m+1}^h-\vx_{k-m-1}^h\emat$.
\ENDIF
\STATE Solve \eqref{eq:coarse:TG} with initial $\vx_k^H$ to get $\vx_*^H$ and then $\vd_k^h\leftarrow\mI_H^h\left(\vx_*^H-\vx_k^H\right)$.  \label{alg:SESOP-TG:constr:solve_coarse}
\STATE  Formulate the augmented $\bar{\mP}_k^h\leftarrow \bmat \vd_k^h &\mP_k^h\emat$.
\STATE Solve approximately \eqref{eq:SESOP:affine:optimize} on $\bar{\mP}_k^h$ and update $\vx_k^h \leftarrow \bar{\bar\vx}_k^h+\bar{\mP}_k^h\valpha_k^*$.\label{alg:SESOP-TG:constr:SESOP}
\STATE $\bar\vx_k^h\leftarrow Relaxation(\vx_k^h,\nu_2)$.\label{alg:SESOP-TG:post}
\ENDWHILE
\STATE $\vx_*^h \leftarrow \bar \vx_k^h$.
\RETURN 
\end{algorithmic}
\end{algorithm}

\begin{figure}[!htb]
\centering
\begin{tikzpicture}[node distance=2cm,auto,>=latex']
\tikzstyle{int}=[draw, fill=blue!20, minimum size=2em]
\tikzstyle{init} = [pin edge={to-,thin,black}]
\tikzstyle{line} = [draw,thick,-latex]
\tikzstyle{transition} = [font=\small]
    \node [int] (a) {$\vx_{k-2}^h$};
    \node (b) [left of=a,node distance=1.5cm, coordinate] {a};
    \node [int] (c) [above right of=a] {$\bar \vx_{k-2}^h$};
    \node [int] (d) [below right of=c] {$\bar{\bar \vx}_{k-1}^h$};
    \node [int] (e) [right of = d] {$\vx_{k-1}^h$};
    \node [int] (f) [above right of=e] {$ \bar\vx_{k-1}^h$};
    \node [int] (g) [below right of=f] {$\bar{\bar \vx}_k^h$};
    \node [int] (h) [right of=g] {$\vx_k^h$}; 
    \node [coordinate] (end) [right of=h, node distance=1.5cm]{};
     
    \path [dashed,line] (b) -- (a) node {};
    \path [line] (a) -- (c) node[transition,pos=0.1,above,align=left] {$\nu_2$};
     \path [line] (c) -- (d) node[transition,pos=0.8,above,align=left] {$\nu_1$};
     \path [line] (d) -- (e) node[font=\scriptsize,pos=0.5,above,align=left] {step \ref{alg:SESOP-TG:constr:SESOP}};
     \path [line] (e) -- (f) node[transition,pos=0.1,above,align=left] {$\nu_2$};
     \path [line] (f) -- (g) node[transition,pos=0.8,above,align=left] {$\nu_1$};
     \path [line] (g) -- (h) node[font=\scriptsize,pos=0.5,above,align=left] {step \ref{alg:SESOP-TG:constr:SESOP}};
     \draw [<->,thick] (g) -- ++(0,-1) -|  (a) node[font=\scriptsize,pos=0.23,above,align=right] {history};
	\path [dashed,line] (h) edge node {} (end) ;
		\draw [<->,thick] (f) -- ++(0,1) -|  (c) node[font=\scriptsize,pos=0.23,above,align=right] {$(k-1)$st iteration};
\end{tikzpicture}

\caption{Flowchart of the iterates of \Cref{alg:SESOP-TG}.}
	\label{fig:flowchart:SESOP-TG-m}
\end{figure}

\begin{remark}\label{remark:alg:SESOP-TG}

Evidently, if we select $\bar{\mP}_k^h$ to contain only $\vd_k^h$, with $\bm\alpha_k^*=1$, then SESOP-TG-$m$ reduces to TG. Note that in \Cref{alg:TG}, we need to valuate $\nabla F^h(\vx_k^h)$ to formulate $\vv_k$ but we do not use it further. Here, however, we put $\nabla F^h(\vx_k^h)$ together with the CGC direction in the augmented subspace $\bar{\mP}_k^h$ for further use. The main drawback of \Cref{alg:SESOP-TG} is of course the need to solve the subspace minimization problem for $\bm\alpha_k^*$ at each iteration. However, in certain cases of problems with special structure or large size, the cost of the subspace minimization may become negligible; see numerical tests in \Cref{sec:sub:NumericalTestsI}.

\end{remark}

\begin{remark}
\label{remark:alg:SESOP-MG}
	Similarly to standard multigrid methods, the multilevel algorithm called SESOP-MG-$m$ is derived by recursively treating \eqref{eq:coarse:TG} at step \ref{alg:SESOP-TG:constr:solve_coarse} of \Cref{alg:SESOP-TG}. Compared with the subspace $\bar{\mP}_h^k$ in \eqref{eq:coarse:TG}, the subspaces in the coarse problems only contain two directions: the preconditioned gradient and the coarse grid correction from the coarser level.
\end{remark}

\subsection{Numerical tests}\label{sec:sub:NumericalTestsI}
We demonstrate the SESOP-MG-$m$ algorithm performance for two nonlinear problems and compare it with steepest descent (SD), Nesterov acceleration \cite{nesterov2018lectures}, and limited-memory BFGS (LBFGS) \cite{nocedal2006numerical}. The MG method proposed in \cite{wen2009line}---denoted ``MG-Line''---is adapted for this comparison, to illustrate the effectiveness of using history. We use SD as the relaxation for ``MG-Line''. For SESOP-MG-$m$, we employ Newton's method for the subspace minimization at each iteration. All the tests are performed on a laptop with 2.3GHz Intel Core i$9$.
 
\subsubsection*{Example I}
Consider the following variational problem on a uniform grid \cite{wen2009line},
\e
\left\{ 
\begin{array}{l}
\min_{u(x,y)} F\left(u(x,y)\right) \equiv \int_\Omega \frac{1}{2}\left|\nabla u(x,y)\right|^2+\gamma \left(u(x,y)e^{u(x,y)}-e^{u(x,y)}\right), \\
~~~~~~~~~~~~~~~~~~~~~~~~~~~~-f(x,y)u(x,y) dx dy \\
~~~~\text{such that}~~~~ u(x,y)=0 ~~~\text{on}~~~~ \partial \Omega,
\end{array}
\right.
\label{eq:nonlinearIVaria}
\ee
where $\nabla$ is the gradient, $\gamma=10$, $\Omega = [0,1]\times [0,1]$, and
$$
f(x,y) = \left( \left(9\pi^2+\gamma e^{ (x^2-x^3)\sin(3\pi y)}\right)\left(x^2-x^3\right)+6x-2\right)\sin\left(3\pi y\right).
$$
Through the Euler-Lagrange equation, the PDE formulation of \eqref{eq:nonlinearIVaria} reads 
\e
\left\{
\begin{array}{c}
-\Delta u(x,y) + \gamma u(x,y) e^{u(x,y)}=f(x,y) ~~~~~~~~~\text{on}~~ \Omega, \\
~~~~~~~~~~~~u(x,y) = 0~~~~~~~~~~~~~~~~~~~~~~~~~~~~~~~~~\text{on}~~\partial \Omega,
\end{array}
\right.
\label{eq:NonlinearI:PDE}
\ee 
where $\Delta$ denotes the Laplacian.

Equation \eqref{eq:nonlinearIVaria} is discretized by finite differences, using first order forward differences for $\nabla$ in \eqref{eq:nonlinearIVaria}, resulting in a five-point stencil for $\Delta$ in \eqref{eq:NonlinearI:PDE}. The finest grid size is $1024\times 1024$, and we employ $7$ levels, coming down to grids of size $8\times 8$.  The coarse problems are defined by rediscretization, and full-weighted residual transfers and bilinear interpolation are used as the restriction and prolongation. The relaxation sweep parameters are $\nu_1=1$ and $\nu_2=0$ for MG-Line. BFGS with up to ten iterations is used for solving the problem on the coarsest level. The minFunc toolbox \cite{schmidt2005minfunc} is used for BFGS and Newton's method. The maximal number of iterations of SESOP-MG-$1$ and MG-Line are set to be $30$ and $100$, respectively. The other methods are allowed to use $500$ iterations. For clarity of display, we denote by $F^*+10^{-8}$ the minimal objective value over all methods after running the maximal allowed number of iterations. 

From \Cref{fig:NonProI}, we clearly observe that SESOP-MG-$1$ is the fastest algorithm in terms of both iteration count and CPU time. Moreover, we note that SD, Nesterov, and LBFGS converge fast initially, but then slow down. This well-known phenomenon is a result of the fact that these methods cannot efficiently eliminate low-frequency error, unlike MG methods which use CGC. Note also that SESOP-MG-$1$ is significantly faster than MG-Line, demonstrating the effectiveness of introducing history for acceleration.

\begin{figure}[!htb]
\centering
\subfigure[]{\centering\includegraphics[width=0.46\textwidth]{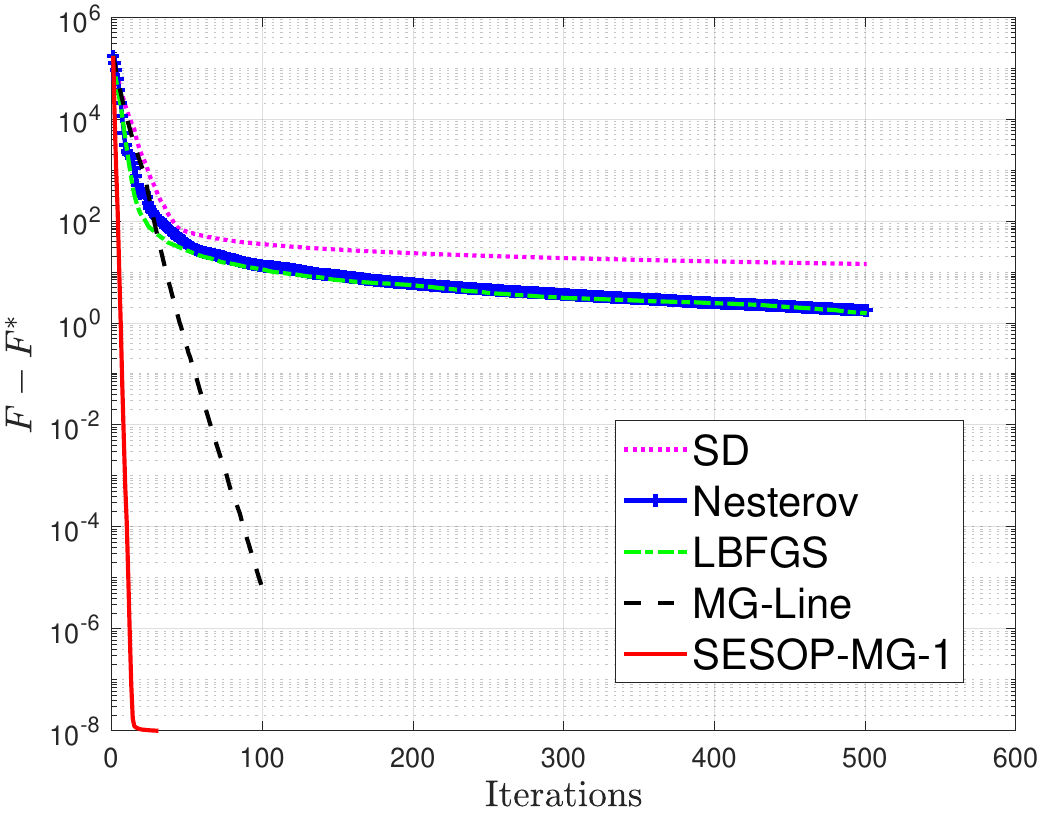}}
\centering
\subfigure[]{\centering\includegraphics[width=0.46\textwidth]{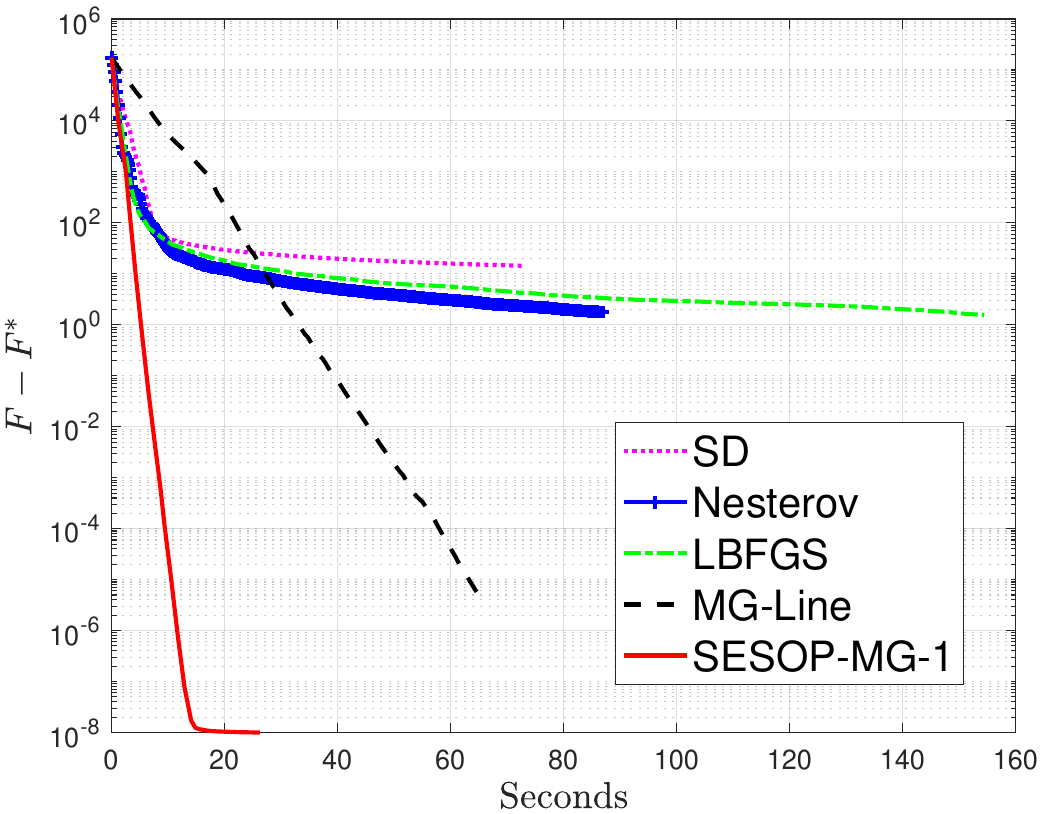}} 
\caption{Comparison of different methods for \eqref{eq:nonlinearIVaria} on $1024\times 1024$ grids.}
\label{fig:NonProI}
\end{figure}

\subsubsection*{Example II}
Our second example is the $p$-Laplacian,
\e
\left\{ 
\begin{array}{l}
\min_{u(x,y)} F\left(u(x,y)\right) \equiv \int_\Omega \left |\nabla u(x,y)+\xi\right |^p-f(x,y)u(x,y) dx dy, \\
~~~~\text{such that}~~~~ u(x,y)=0 ~~~\text{on}~~~~ \partial \Omega,
\end{array}
\right.
\label{eq:nonlinearIIvaria}
\ee
where $p\in(1,2)$. The corresponding PDE of \eqref{eq:nonlinearIIvaria} is:
\e
\left\{
\begin{array}{c}
-\nabla \cdot \left(\left | \nabla u+\xi \right |^{p-2} \nabla u\right)=f ~~~~~~~~~~\text{on}~~ \Omega \\
~~~~~~~~~u(x,y) = 0~~~~~~~~~~~~~~~~~~~~~~~~~~\text{on}~~\partial \Omega.
\end{array}
\right.
\label{eq:nonlinearIIvariaPDE}
\ee
The parameter $\xi=10^{-6}$ is introduced to maintain differentiability (hence a positive denominator). The function $f(x,y)$ is defined by substituting $u(x,y)=(x^2-x^3)\sin(3\pi y)$ into \eqref{eq:nonlinearIIvariaPDE}. Note that solving \eqref{eq:nonlinearIIvaria} becomes especially challenging when $p$ is close to $1$. We choose $p=1.3~\text{and}~1.6$ to study the performance of our approach. The experimental setting is the same as that of \emph{Example I}, except the maximal number of iterations allowed. As seen in Figure \Cref{fig:NonProII}, SESOP-MG-$1$ is the fastest of the five methods, followed by MG-Line, as in \emph{Example I}.


\begin{figure}[!htb]
\centering
\subfigure[$p=1.3$.]{\centering\includegraphics[width=0.46\textwidth]{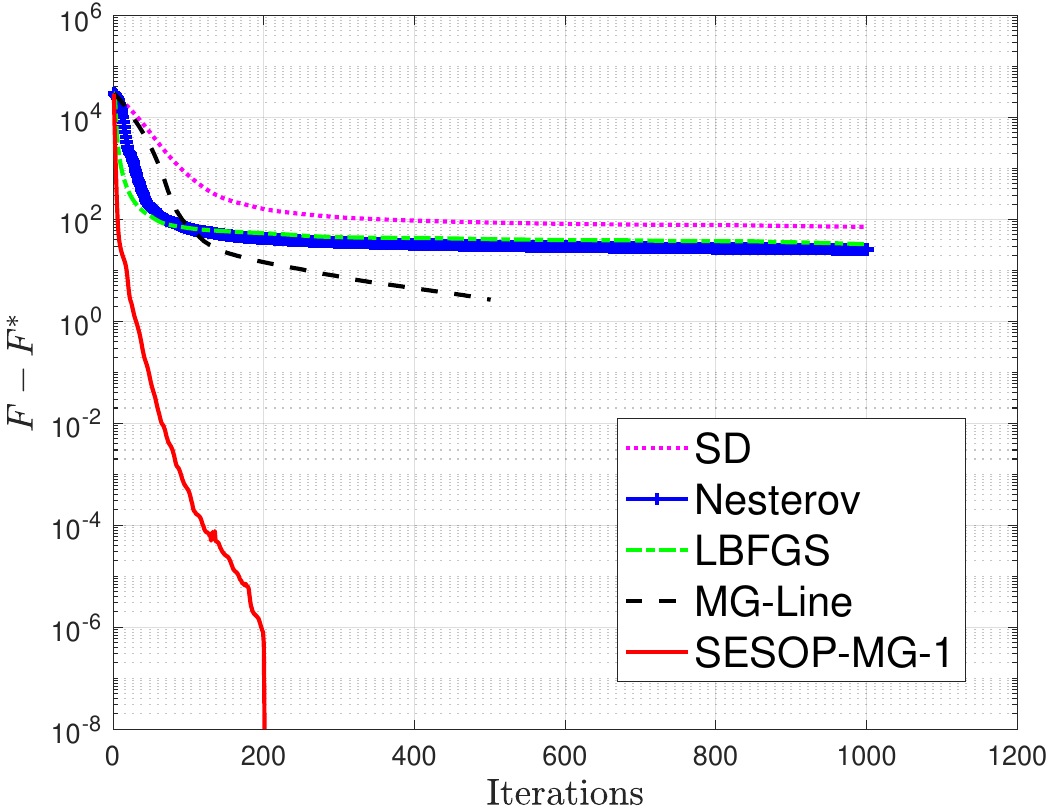}\label{fig:NonProII:iterp1.3}}
\centering
\subfigure[$p=1.3$.]{\centering\includegraphics[width=0.46\textwidth]{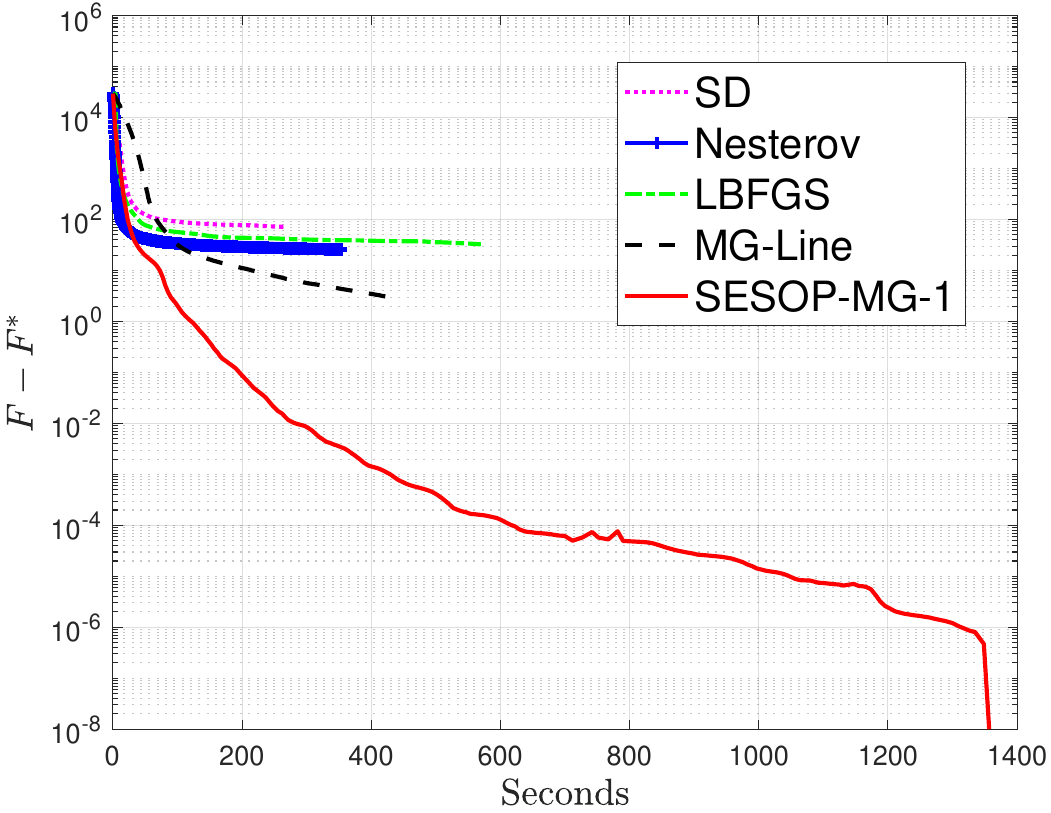}\label{fig:NonProII:CPUp1.3}}

\subfigure[$p=1.6$.]{\centering\includegraphics[width=0.46\textwidth]{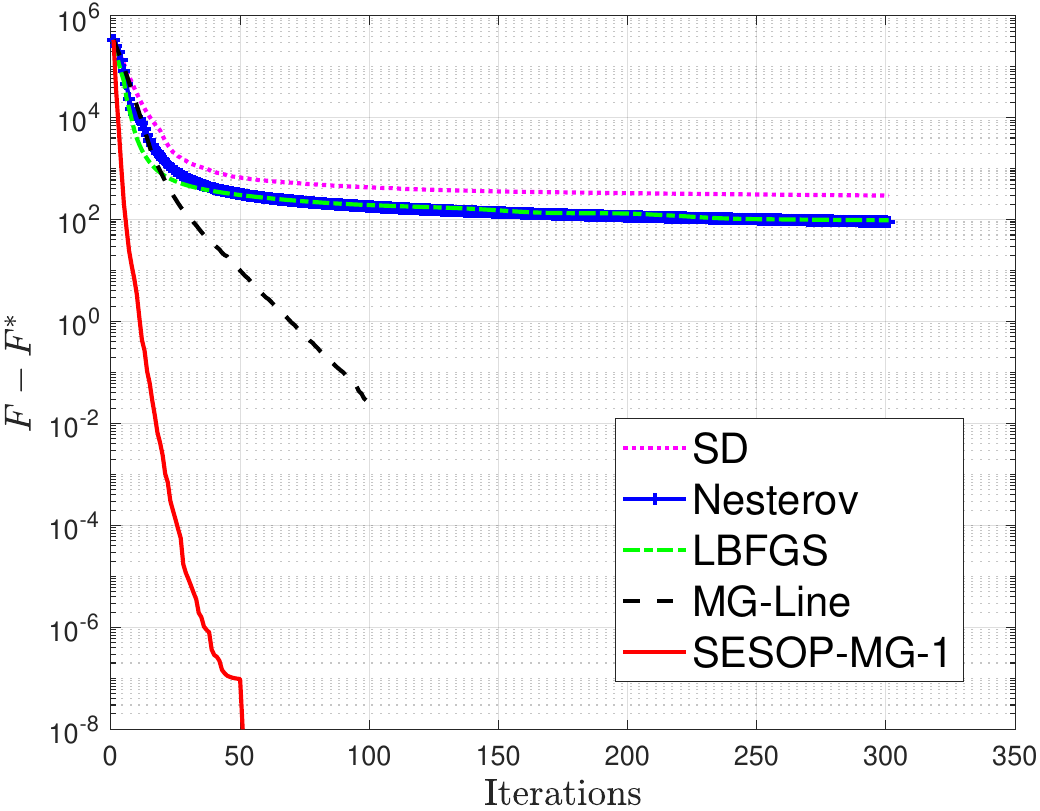}\label{fig:NonProII:iterp1.6}}
\centering
\subfigure[$p=1.6$.]{\centering\includegraphics[width=0.46\textwidth]{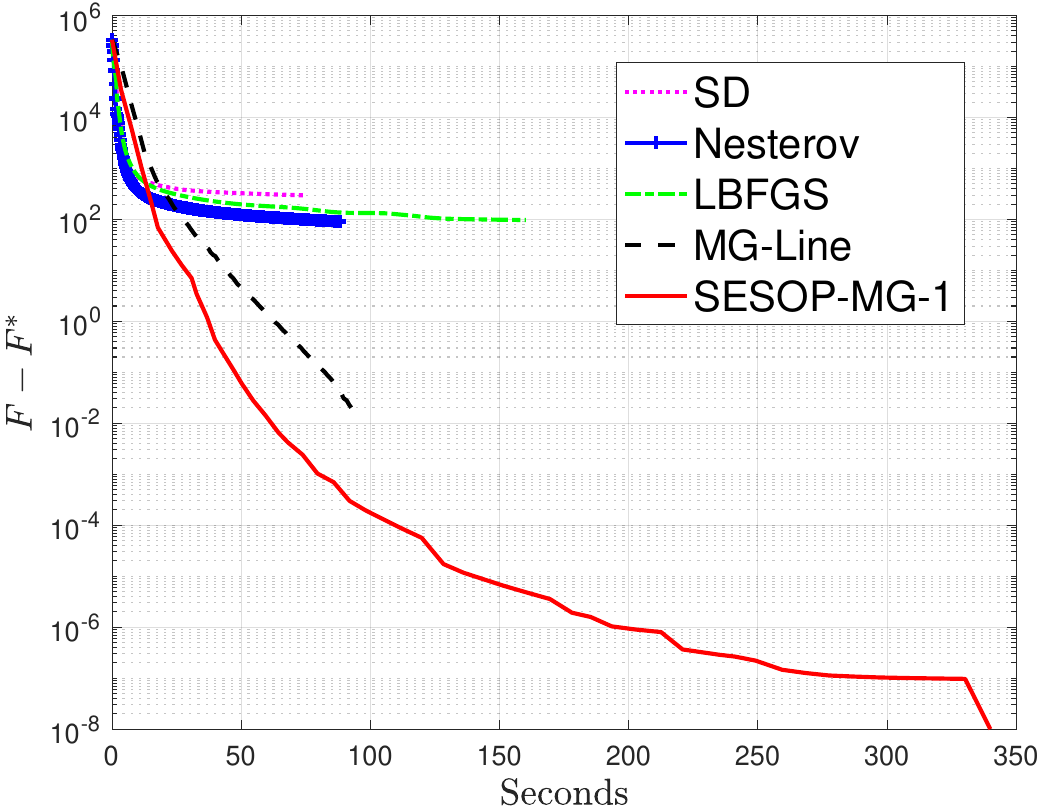}\label{fig:NonProII:CPUp1.6}}
\caption{Comparison of different methods for \eqref{eq:nonlinearIIvaria} on $1024\times 1024$ grids.}
\label{fig:NonProII}
\end{figure}

\subsubsection*{Dependence of SESOP-MG-$m$ on $m$} Next we show how the choice of the search-space size $m$ affects the performance of SESOP-MG-$m$. To this end,  we apply SESOP-MG-$m$ with different $m$ to \eqref{eq:nonlinearIIvaria} with $p=1.3$. In \Cref{fig:ComparhistSESOPMP:iter}, we see that using a larger $m$ yields faster convergence rates. However, a larger $m$ also increases the complexity of the subspace minimization results in higher CPU times per iteration. From \Cref{fig:ComparhistSESOPMP:CPU}, we find that $m=3$ is a good compromise for achieving low CPU times for this test. We also see that $m=5$ results in higher times than $m=3,4$ because, in practice, the larger size of the subspace introduces some numerical difficulties.

\begin{figure}[!htb]
\centering
\subfigure[Objective value versus iterations. $p=1.3$.]{\centering\includegraphics[width=0.46\textwidth]{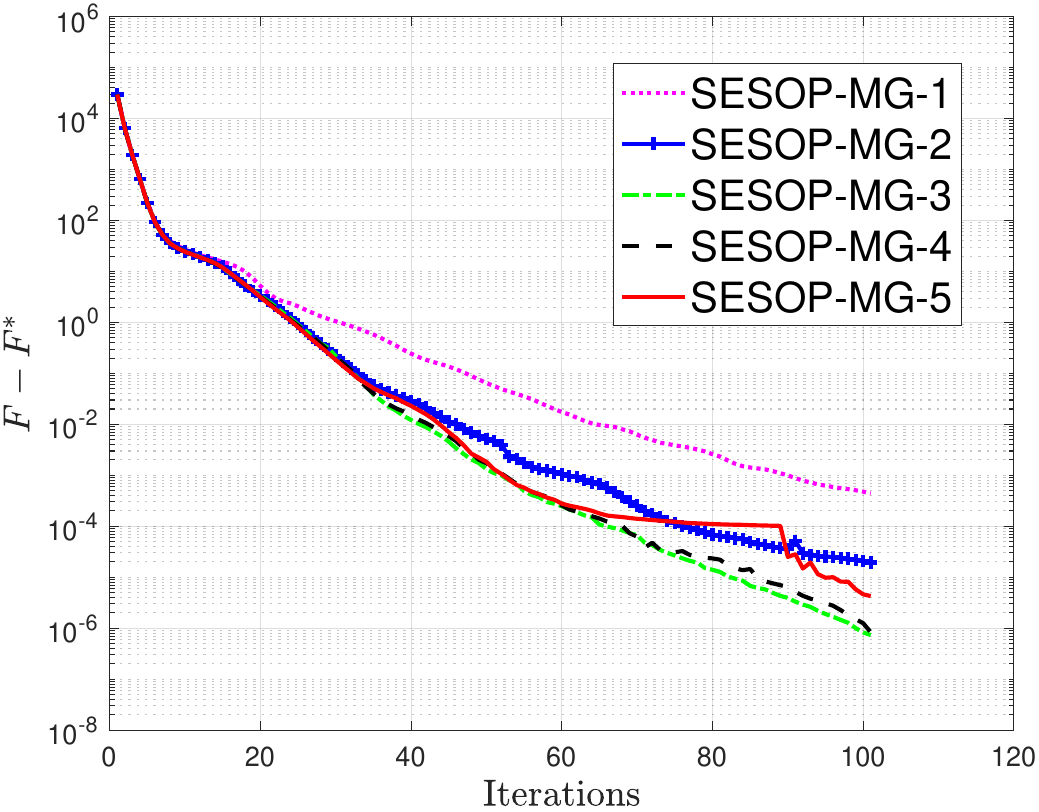}\label{fig:ComparhistSESOPMP:iter}}
\centering
\subfigure[Objective value versus CPU time. $p=1.3$.]{\centering\includegraphics[width=0.46\textwidth]{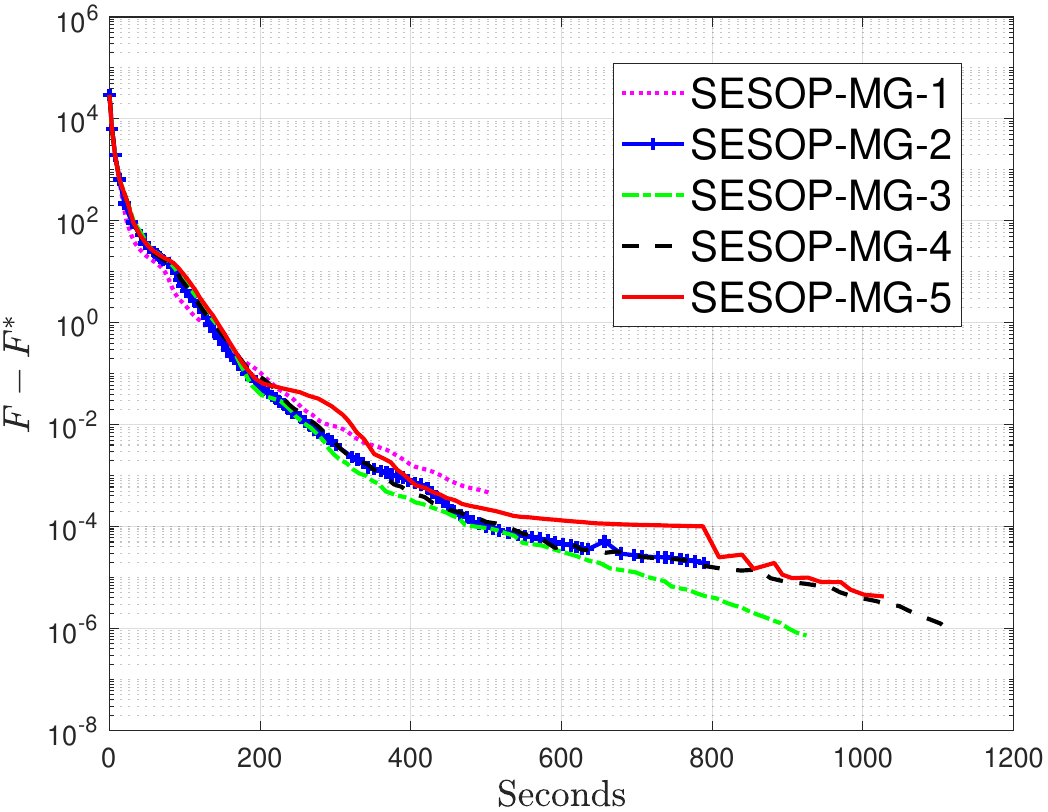}\label{fig:ComparhistSESOPMP:CPU}}

\caption{Comparison of different $m$ for \eqref{eq:nonlinearIIvaria} on $1024\times 1024$ grids.}
\label{fig:ComparhistSESOPMP}
\end{figure}

The numerical tests demonstrate the potential advantage of SESOP-TG/MG-$m$ for such types of optimization problems. However, SESOP-TG/MG-$m$ comes with the cost of solving a subspace minimization problem at each iteration. From \Cref{fig:ComparhistSESOPMP}, we observe that the performance of SESOP-TG/MG-$m$ deteriorates for large $m$. In the case of quadratic optimization problems, we may be able to avoid the subspace minimization, by using fixed nearly-optimal stepsizes for SESOP-TG-$1$. Indeed, we derive such fixed stepsizes, and show that they yield ACF's that are comparable to those obtained by subspace minimization. In such cases, we get the acceleration nearly for free, provided that we can estimate the fixed parameters efficiently. To this end, we propose two heuristic methods, based on local Fourier analysis (LFA) and smoothing analysis, to estimate the optimal fixed stepsizes cheaply.

\section{Convergence factor analysis of SESOP-TG-\texorpdfstring{$1$}{TEXT} for quadratic problems}\label{sec:ConvFactLinear}
To gain insight, we analyze SESOP-TG-$1$ for quadratic optimization problems, which are equivalent to the solution of linear systems. We first derive a fairly general formulation for SESOP-TG-$1$. Then we explore, under certain simplifying assumptions, a fixed-stepsize variant of SESOP-TG-$1$. In this analysis we assume for simplicity no pre- or post-relaxation steps. 

Consider the linear system 
\e
\mA\vx = \vf, \label{eq:2Dgeneral:BD:Dist:linear}
\ee
where $\mA\in\mathbb R^{N\times N}$ is a symmetric positive-definite (SPD) matrix, and we omit $h$ superscripts for notational simplicity. Evidently, solving \eqref{eq:2Dgeneral:BD:Dist:linear} is equivalent to the following quadratic minimization problem:
\e
\vx_*=\argmin_{\vx\in \mathbb R^N}  F^h(\vx) \triangleqtemp \frac{1}{2}\vx^\mathcal T \mA \vx - \vf^\mathcal T\vx. \label{eq:conv:analysis:QP}
\ee



\noindent Given iterates $\vx_{k-1}$ and $\vx_{k-2}$, the next iterate produced by SESOP-TG-$1$ is given by 
\e
\vx_k = \vx_{k-1}+c_1(\vx_{k-1}-\vx_{k-2})+c_2\mathcal{\mP}(\vf-\mA\vx_{k-1})+c_3\mI_H^h\mA_H^{-1}\mI_h^H(\vf-\mA\vx_{k-1}),\label{eq:update:history_1}
\ee
where $c_1,c_2,c_3$ are the optimal weights associated with the three directions comprising $\bar\mP_k^h$: $c_1$ multiplies the so-called history, that is, the difference between the last two iterates; $c_2$ multiplies the preconditioned gradient; $c_3$ multiplies the CGC direction $\vd^h_{k-1}$. Here, $\mA_H$ represents the coarse-grid matrix approximating $\mA$, which is most commonly defined by the Galerkin formula, $\mA_H = \mI_h^H \mA \mI_H^h$, or simply by rediscretization on the coarse-grid in the case where $\mA$ is the discretization of an elliptic PDE on the fine-grid.  
Subtracting $\vx_*$ from both sides of \eqref{eq:update:history_1}, and denoting the error by $\ve_k = \vx_* - \vx_{k}$, we get
\e
\ve_k = \ve_{k-1}+c_1(\ve_{k-1}-\ve_{k-2})-c_2 \mathcal{\mP} \mA\ve_{k-1}-c_3\mI_H^h\mA_H^{-1}\mI_h^H\mA\ve_{k-1}.
\label{eq:2d:defective}
\ee
Rearranging \eqref{eq:2d:defective} yields
\e
\ve_k = 
\bm B\ve_{k-1}-c_1\ve_{k-2},
\label{eq:2d:defective_rearrange}
\ee
where 
\e 
\bm B=(1+c_1)\mI-(c_2\mathcal{\mP}+c_3I_H^h\mA_H^{-1}I_h^H)\mA, 
\ee
and $\bm I$ denotes the identity matrix. Define the vector $\mE_k = \bmat \ve_k\\ \ve_{k-1} \emat$. Then, \eqref{eq:2d:defective_rearrange} implies the following relation:
\e
\mE_k = \bm\Upsilon\mE_{k-1}, ~~ \bm\Upsilon \triangleqtemp \bmat \bm B&-c_1\mI\\ \mI&\bm0\emat,
\label{eq:IterativeMatrixUpsilon}
\ee
and the asymptotic convergence factor (ACF) of SESOP-TG-$1$ is given by the spectral radius of $\bm \Upsilon$.

To analyze the ACF of SESOP-TG-$1$, we continue under the assumption that the coefficients $c_j$, $j=1,2,3$, are fixed, and compute the optimal coefficients, yielding the smallest ACF. Denote by $r$ an eigenvalue of $\bm\Upsilon$ with eigenvector $\vv = [\vv_1, \vv_2]^{\mathcal T}$,
$$ \bmat \bm B&-c_1\mI\\ \mI&\bm0\emat \bmat \vv_1 \\ \vv_2 \emat = r \bmat \vv_1 \\ \vv_2 \emat \, .
$$
Hence, $\vv_1 = r \vv_2$ and $\bm B \vv_1 - c_1 \vv_2 = r \vv_1$. This yields
$$
\bm B \vv_1 = \left(r + \frac{c_1}{r} \right) \vv_1.
$$
Thus, $\vv_1$ is an eigenvector of $\bm B$ with eigenvalue $b \triangleqtemp r + \frac{c_1}{r}$ leading to
$$
r^2 -b r + c_1 = 0,
$$
with solutions
$$
r_1(b,c_1) = \frac{1}{2} \left( b + \sqrt{b^2 - 4c_1} \right)~\text{and}~r_2(b,c_1)=\frac{1}{2} \left( b-\sqrt{b^2 - 4c_1} \right).
$$
The spectral radius of $\bm \Upsilon$ is therefore
$$
\rho(\bm\Upsilon) = \frac{1}{2}\max_b \left|b+\text{sgn}(b)\sqrt{b^2-4c_1}\right|,
$$
where $b$ runs over the eigenvalues of $\bm B$ and $\text{sgn}(\cdot)$ is a sign function which is defined to be 1 for non-negative arguments and -1 for negative arguments. For the remainder of our analysis we focus on the case where the eigenvalues $b$ of $\bm B$ are all real. 

\subsection[]{The case of real {\em b}} \label{subsection:real:b}
In many practical cases, the eigenvalues of $\bm B$ are all real, which simplifies the analysis and yields insight. We focus next on a common situation where this is indeed the case.

\begin{lemma} \label{lem:real:eigenvalues:Gamma}
Assume:
\begin{enumerate}
\item
The prolongation has full column-rank and the restriction is the adjoint of the prolongation: $I_h^H = (I_H^h)^{\mathcal T}$.
\item
The coarse-grid operator $\mA_H$ is SPD.
\item
The preconditioner $\mathcal{\mP}$ is SPD.
\end{enumerate}
Then the eignevalues of $ \bm B=(1+c_1)\mI-(c_2\mathcal{\mP}+c_3\mI_H^h\mA_H^{-1}\mI_h^H)\mA $ are all real. 
\end{lemma}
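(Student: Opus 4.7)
The plan is to recognize that $\bm\Gamma$ is a linear combination of the identity and the product of a symmetric matrix with an SPD matrix, and then invoke the classical fact that such a product is similar to a symmetric matrix and therefore has only real eigenvalues.

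First I would collect the operator $\mM \triangleqtemp c_2\mPhi + c_3\mI_H^h \mA_H^{-1}\mI_h^H$, so that $\bm\Gamma = (1+c_1)\mI - \mM\mA$. The eigenvalues of $\bm\Gamma$ are then $(1+c_1) - \mu$, where $\mu$ ranges over the eigenvalues of $\mM\mA$, so it suffices to show that $\mM\mA$ has only real eigenvalues.

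Next I would verify that $\mM$ is symmetric. The term $c_2\mPhi$ is symmetric by Assumption 3. For the second term, Assumption 1 gives $\mI_h^H = (\mI_H^h)^{\mathcal T}$ (absorbing the positive multiplicative constant as needed), so
\begin{equation*}
\bigl(\mI_H^h \mA_H^{-1}\mI_h^H\bigr)^{\mathcal T} = (\mI_h^H)^{\mathcal T}\mA_H^{-\mathcal T}(\mI_H^h)^{\mathcal T} = \mI_H^h\mA_H^{-1}\mI_h^H,
\end{equation*}
using Assumption 2, which guarantees that $\mA_H$, and hence $\mA_H^{-1}$, is symmetric. Thus $\mM = \mM^{\mathcal T}$.

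Finally, since $\mA$ is SPD, it admits a unique SPD square root $\mA^{1/2}$, and
\begin{equation*}
\mA^{1/2}\,(\mM\mA)\,\mA^{-1/2} = \mA^{1/2}\mM\mA^{1/2}.
\end{equation*}
The right-hand side is symmetric (as a congruence transform of the symmetric $\mM$), so it has a complete real spectrum. Similar matrices share eigenvalues, so $\mM\mA$ has only real eigenvalues, and hence so does $\bm\Gamma = (1+c_1)\mI - \mM\mA$.

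There is no serious obstacle here: the only subtlety is making sure that each ingredient of $\mM$ inherits symmetry from the stated assumptions, and in particular that the condition $\mI_h^H = (\mI_H^h)^{\mathcal T}$ (up to a positive scalar) is exactly what is required to make the coarse-grid contribution symmetric; the rest is the standard similarity trick for the product of a symmetric matrix with an SPD matrix.
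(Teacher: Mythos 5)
Your proof is correct and follows essentially the same route as the paper: conjugating by $\mS = \sqrt{\mA}$ to turn $\bm\Gamma$ (equivalently, $\mM\mA$) into a manifestly symmetric matrix with the same spectrum. The only difference is that you spell out explicitly why $c_2\mPhi+c_3\mI_H^h\mA_H^{-1}\mI_h^H$ is symmetric under the stated assumptions, which the paper leaves as ``evident.''
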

\begin{proof}
Because $\mA$ is SPD, there exists a SPD matrix $\mA^{\frac{1}{2}}$ such that $\mA^{\frac{1}{2}} \mA^{\frac{1}{2}} = \mA$. The matrix 
$$
\mA^{\frac{1}{2}} \bm B {\mA^{-\frac{1}{2}}} = (1+c_1)\mI-\mA^{\frac{1}{2}}(c_2\mathcal{\mP}+c_3\mI_H^h\mA_H^{-1}\mI_h^H)\mA^{\frac{1}{2}}
$$
is similar to $\bm B$ so they have the same eigenvalues. Moreover, $\mA^{\frac{1}{2}}\bm B {\mA^{-\frac{1}{2}}}$ is evidently symmetric, so its eigenvalues are all real.
\end{proof}
The first two assumptions of this lemma are satisfied commonly, including of course both the case where $\mA_H$ is defined by rediscretization on the coarse grid and the case of Galerkin coarsening. The preconditioner $\mathcal{\mP}$ is SPD for some commonly used MG relaxation methods, including Richardson (where $\mathcal{\mP}$ is the identity matrix), Jacobi (where $\mathcal{\mP}$ is the inverse of the diagonal of $\mA$), and symmetric Gauss-Seidel.

We next adopt the change of variables $c_{23} = c_2 + c_3$, $\alpha = c_2 / c_{23}$. This yields 
\e 
\bm B=(1+c_1)\mI-c_{23}\mA_\alpha, 
\ee
with
\e
\mA_\alpha = \left(\alpha\mathcal{\mP}+ (1-\alpha) \mI_H^h\mA_H^{-1}\mI_h^H\right)\mA.\label{eq:A_Alpha}
\ee

\begin{lemma} \label{lem:P:Posdef}
The eigenvalues of $\mA_\alpha$ are real and positive for any $\alpha \in (0,1]$.
\end{lemma}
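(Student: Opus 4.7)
The plan is to show that $\mW_\alpha$ is similar to a symmetric positive-definite matrix, by extending the trick already used in Lemma~\ref{lem:real:eigenvalues:Gamma}. The key observation is that $\mW_\alpha$ is a product of two matrices: $\mW_\alpha = \mM_\alpha \mA$, where
$$
\mM_\alpha := \alpha\mPhi + (1-\alpha)\mI_H^h \mA_H^{-1}\mI_h^H.
$$
If I can establish that $\mM_\alpha$ is SPD for $\alpha \in (0,1]$, then the product $\mM_\alpha\mA$ of two SPD matrices has real positive eigenvalues, via a standard similarity argument.

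First I would verify that $\mM_\alpha$ is SPD. Under the standing assumptions of the preceding lemma, $\mI_h^H = (\mI_H^h)^{\mathcal T}$ (up to a positive scalar, which is harmless), so $\mI_H^h\mA_H^{-1}\mI_h^H = \mI_H^h\mA_H^{-1}(\mI_H^h)^{\mathcal T}$ is manifestly symmetric. Because $\mA_H$ is SPD, this matrix is positive semidefinite (it is positive definite on the range of $\mI_H^h$, but has a nontrivial nullspace when $N_c<N$, which is why we cannot allow $\alpha=0$). Since $\mPhi$ is SPD and $\alpha>0$, the matrix $\alpha\mPhi$ is SPD; adding the PSD term $(1-\alpha)\mI_H^h\mA_H^{-1}\mI_h^H$ (with nonnegative coefficient $1-\alpha\ge 0$) yields an SPD matrix $\mM_\alpha$.

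Next I would apply the same square-root similarity transformation as in Lemma~\ref{lem:real:eigenvalues:Gamma}. Let $\mS=\sqrt{\mA}$, which is SPD. Then
$$
\mS\,\mW_\alpha\,\mS^{-1} = \mS\,\mM_\alpha\,\mA\,\mS^{-1} = \mS\,\mM_\alpha\,\mS,
$$
using $\mA = \mS^2$. The right-hand side is symmetric because $\mM_\alpha$ and $\mS$ are, and it is positive definite because it is a congruence of the SPD matrix $\mM_\alpha$ by the nonsingular $\mS$. Therefore $\mS\mM_\alpha\mS$ has real, strictly positive eigenvalues, and since $\mW_\alpha$ is similar to this matrix, $\mW_\alpha$ shares the same spectrum.

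I do not anticipate any real obstacle here; the argument is essentially an application of the standard fact that a product of two SPD matrices has real positive spectrum, with the only subtle point being the justification that $\mM_\alpha$ remains SPD. That in turn hinges on keeping $\alpha$ strictly positive so that the (potentially singular) coarse-grid term is dominated by the SPD contribution from $\alpha\mPhi$.
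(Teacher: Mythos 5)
Your argument is correct and follows essentially the same route as the paper's proof: both establish that $\alpha\mPhi + (1-\alpha)\mI_H^h\mA_H^{-1}\mI_h^H$ is SPD (as the sum of an SPD term and a PSD term, requiring $\alpha>0$) and then use the similarity $\mS\mW_\alpha\mS^{-1}=\mS\mM_\alpha\mS$ with $\mS=\sqrt{\mA}$ to conclude the spectrum is real and positive. Your write-up simply spells out the details that the paper's one-line proof leaves implicit.
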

\begin{proof}
This follows from the fact that $\mA$, $\mathcal{\mP}$ and $I_H^h\mA_H^{-1}I_h^H$ are SPD. 
\end{proof}
We henceforth denote the eigenvalues of $\mA_\alpha$ by $a_{\alpha}$, and assume $\alpha \in (0,1]$ (that is, $c_2$ and $c_3$ are of the same sign), so the $a_{\alpha}$'s are all real and positive. We then proceed by fixing $\alpha$ and optimizing $c_1$ and $c_{23}$ so as to minimize the spectral radius of $\mUpsilon$. In light of \Cref{lem:real:eigenvalues:Gamma}, we have $b\equiv b(c_1,c_{23},a_{\alpha}) = 1+c_1-c_{23}a_{\alpha}$, and the ACF as a function of $c_1,~c_{23}$ is given by:
\e
\bar r(c_1,c_{23}) \triangleqtemp \rho(\bm\Upsilon)=\max_{a_{\alpha}} \hat r(c_1,c_{23},a_\alpha),\label{eq:worst_conv_rate:def}
\ee
where $\hat r(c_1,c_{23},a_\alpha)\triangleqtemp\frac{1}{2}\left|b+\text{sgn}(b)\sqrt{b^2-4c_1}\right|$. For convenience, we use $\bar r_{c_1}(c_{23})$ (respectively, $\bar r_{c_{23}}(c_1)$) to denote $\bar r(c_1,c_{23})$ considered as a single-variable function with a fixed $c_1$ (respectively, $c_{23}$).
Our parameter optimization problem is now defined by
$$
(c_1^*, c_{23}^*) = \argmin_{(c_1,c_{23})} \bar r(c_1,c_{23}).
$$
The following three lemmas provide us with a closed-form solution of this minimization problem, if the smallest and largest eigenvalues of $\mA_\alpha$ are given.
\begin{lemma}\label{lemma_1:lar:sma:ACF}
For any fixed $c_1$ and $c_{23}$, $\hat r(c_1,c_{23},a_\alpha)$ is maximized at either $a_{\alpha}^{min}$ or $a_{\alpha}^{max}$, the smallest and largest eigenvalues of $\mA_\alpha$, respectively.
\end{lemma}
\begin{proof}
See \Cref{proof:lemma_1:lar:sma:ACF}.
\end{proof}

\begin{lemma}\label{lemma_2:determ:c_23}
For any fixed $c_1$, $c_{23}^* = 2(1+c_1) / (a_{\alpha}^{max} + a_{\alpha}^{min})$.
\end{lemma}
\begin{proof}
See \Cref{proof:lemma_2:determ:c_23}.
\end{proof}
Plugging $c_{23}^*$ into \eqref{eq:worst_conv_rate:def} yields
\e
\bar r_{c_{23}^*}(c_1) = \frac{1}{2}\left|\mu (1+c_1)+\sqrt{\mu^2(1+c_1)^2-4c_1}\right|,
\label{eq:worst_conv_rate:def:abs:no_c_2}
\ee
where $\mu=(\kappa-1)/(\kappa+1)\in (0,1)$ with $\kappa=\frac{a_{\alpha}^{max}}{a_{\alpha}^{min}}>1$ the condition number of $\mA_\alpha$. 
\Cref{lemma_3:determ:c_1} provides the optimal $c_1$, which minimizes $\bar r_{c_{23}^*}(c_1)$ in \eqref{eq:worst_conv_rate:def:abs:no_c_2}. 
\begin{lemma}\label{lemma_3:determ:c_1}
$c_1^*=\left( \frac{\sqrt{\kappa}-1}{\sqrt{\kappa}+1} \right)^2$.
\end{lemma}
\begin{proof}
See \Cref{proof:lemma_3:determ:c_1}.
\end{proof}
Substituting the expression of  $c_1^*$ into \eqref{eq:worst_conv_rate:def:abs:no_c_2}, we get the worst-case convergence factor $r^* $ with optimal $c_1^*$ and $c_{23}^*$
\e
\bar r^* \triangleqtemp \bar r(c_1^*,c_{23}^*)=\frac{\sqrt{\kappa}-1}{\sqrt{\kappa}+1}. \label{eq:OPtimalCovergenceHistory}
\ee
Note that $c_1^*=(\bar r^*)^2$ indicates that the history term becomes significant if the problem is ill-conditioned.
\begin{remark}
\label{remark:AnalysisSummary}

The optimal ACF of the fixed-coefficient variant of SESOP-TG-$1$ is found to be $\bar r^* = \frac{\sqrt{\kappa}-1}{\sqrt{\kappa}+1}$, where $\kappa$ is the condition number of $\mA_\alpha$, optimized over $\alpha \in (0,1]$. (The optimal choice of $\alpha$ will be discussed further below.) By using the definition of $\kappa$, the optimal coefficient for the preconditioned gradient term as specified in \Cref{lemma_2:determ:c_23} can be written as
\e
c_{23}^* = \frac{4}{a_{\alpha}^{min}(\sqrt{\kappa}+1)^2}.
\label{eq:SimplifiedOptimalC23}
\ee
By setting $c_1 = 0$, we retain SESOP-TG-$0$ and then the optimal $c_{23}$ becomes
\e
c_{23}^* = \frac{2}{a_{\alpha}^{min}(\kappa+1)}.
\label{eq:OptimalC23NoHistory}
\ee
Evidently, the asymptotic convergence factor of  SESOP-TG-$0$ is
\e 
\bar r^* = \frac{\kappa - 1}{\kappa + 1}.
\label{eq:OptimalConvergenceFactorNoHistory}
\ee
Comparing \eqref{eq:OPtimalCovergenceHistory} with \eqref{eq:OptimalConvergenceFactorNoHistory}, we see the significant improvement provided by the use of a single history direction, with the condition number replaced by the square root. This result is reminiscent of the conjugate gradients (CG) method compared to steepest descent (SD) for quadratic problems, and indeed there is an equivalence between SESOP and CG (respectively, SD) for the case of single history (respectively, no history) with $\alpha=1$. We note that the condition number in our scheme is that of $\mA_\alpha$, not $\mA$, which differs from the case where we do not use the CGC direction. The advantage of SESOP is in allowing the addition of various search directions, at the cost of optimizing the coefficients. This study is partly aimed at reducing this cost.
\end{remark}

\subsection{Towards optimizing the condition number of \texorpdfstring{${\mA}_\alpha$}{TEXT}}\label{sec:subInsightsofOptCond}
The previous analysis indicates that we should aim to minimize $\kappa$, the condition number of $\mA_\alpha$. In certain cases, particularly when $\mA$ is a circulant matrix (typically the discretization of an elliptic PDE with constant coefficients on a rectangular or infinite domain), this can be done by means of Fourier analysis \cite{wienands2004practical}, as discussed in \Cref{sec:sub:optCondPractice}. Here, we begin with a more general discussion to gain insight into this matter. We consider the case of Galerkin coarsening, $\mA_H = (\mI_H^h)^\mathcal T \mA \mI_H^h$, and $\mathcal{\mP} = \mI$. Guided by  
\cite{falgout2005two}, we consider the case where the columns of the prolongation matrix $\mI_H^h$ are comprised of a subset of the eigenvectors of $\mA$.  

\begin{lemma} \label{lem:ConditionNumber}
Let $\{a_i,\va_i\}$, $i = 1, \ldots , N,$ denote the eigenvalues and eigenvectors of $\mA$. Assume that the columns of the prolongation matrix $\mI_H^h$ are comprised of a subset of the eigenectors of $\mA$, and denote by $\mathcal R (\mI_H^h)$ the range of the prolongation, that is, the subspace spanned by the columns of $\mI_H^h$. Denote
\begin{eqnarray*}
a_{fmax}  =  \max_{i:\va_i \notin \mathcal R (\mI_H^h)}  a_i,~~~~~&
a_{fmin}  =  \min_{i:\va_i \notin \mathcal R (\mI_H^h)}  a_i, \\
a_{cmax}  =  \max_{i:\va_i \in \mathcal R (\mI_H^h)}  a_i, ~~~~~&
a_{cmin}  =  \min_{i:\va_i \in \mathcal R (\mI_H^h)}  a_i.
\end{eqnarray*}
Then, the condition number of $\mA_\alpha$ in \eqref{eq:A_Alpha} with $\mathcal{\mP} = \mI$ is given by 
\e \label{eq:Kappa}
\kappa = \frac{\max(\alpha a_{fmax}, ~\alpha a_{cmax}+1-\alpha)}{\min(\alpha a_{fmin}, ~\alpha a_{cmin}+1-\alpha)}.
\ee
\end{lemma}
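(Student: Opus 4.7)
The plan is to exploit the SPD structure of $\mA$, which ensures its eigenvectors $\{\vw_i\}$ form an orthonormal basis, together with the hypothesis that the columns of $\mI_H^h$ are a subset of these eigenvectors. Under these conditions, each $\vw_i$ turns out to be an eigenvector of $\mW_\alpha$ with an explicit eigenvalue depending on whether $\vw_i \in \mathcal R(\mI_H^h)$ or not; once this is in hand, the condition number formula \cref{eq:Kappa} follows by inspection.

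First I would index the columns of $\mI_H^h$ as $[\vw_{j_1},\ldots,\vw_{j_{N_c}}]$ and use orthonormality to note that $\mI_h^H \mI_H^h = (\mI_H^h)^\mathcal T \mI_H^h = \mI_{N_c}$. Then I would compute the Galerkin coarse operator $\mA_H = (\mI_H^h)^\mathcal T \mA \mI_H^h$; since $\mA \vw_{j_k} = \eta_{j_k}\vw_{j_k}$, this is the diagonal matrix $\diag(\eta_{j_1},\ldots,\eta_{j_{N_c}})$, whose inverse is simply $\diag(1/\eta_{j_1},\ldots,1/\eta_{j_{N_c}})$.

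Next I would apply $\mW_\alpha=\bigl(\alpha\mI+(1-\alpha)\mI_H^h\mA_H^{-1}(\mI_H^h)^\mathcal T\bigr)\mA$ separately to each eigenvector $\vw_i$. For $\vw_i \notin \mathcal R(\mI_H^h)$, orthogonality gives $(\mI_H^h)^\mathcal T \vw_i = \bm 0$, so the coarse-correction term annihilates $\vw_i$ and $\mW_\alpha \vw_i = \alpha \eta_i \vw_i$. For $\vw_i = \vw_{j_k} \in \mathcal R(\mI_H^h)$, one checks that $(\mI_H^h)^\mathcal T \mA \vw_i = \eta_i \ve_k$, then $\mA_H^{-1}(\eta_i \ve_k) = \ve_k$, and finally $\mI_H^h \ve_k = \vw_i$; combining with the $\alpha\mA$ term yields $\mW_\alpha \vw_i = (\alpha \eta_i + 1 - \alpha)\vw_i$. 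Thus the spectrum of $\mW_\alpha$ is the disjoint union
\[
\{\alpha \eta_i : \vw_i \notin \mathcal R(\mI_H^h)\} \,\cup\, \{\alpha \eta_i + 1 - \alpha : \vw_i \in \mathcal R(\mI_H^h)\}.
\]

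Finally, since by \cref{lem:P:Posdef} all these eigenvalues are positive for $\alpha \in (0,1]$, the condition number is the ratio of the maximum to the minimum of this set, which is exactly the right-hand side of \cref{eq:Kappa} after grouping the two families via $\eta_{fmax},\eta_{fmin},\eta_{cmax},\eta_{cmin}$. I do not anticipate any real obstacle: the argument is essentially a bookkeeping exercise resting on the two crucial orthogonality facts $(\mI_H^h)^\mathcal T \mI_H^h = \mI$ and $(\mI_H^h)^\mathcal T \vw_i = \bm 0$ for $\vw_i \notin \mathcal R(\mI_H^h)$, both of which are immediate from the SPD hypothesis on $\mA$. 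The only mild subtlety is to note implicitly that both the coarse and fine eigenvector subsets are nonempty (otherwise one of the max/min in \cref{eq:Kappa} is vacuous), which is the generic case of interest.
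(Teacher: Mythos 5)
Your proposal is correct and follows essentially the same route as the paper: show that each eigenvector $\vw_i$ of $\mA$ is an eigenvector of the CGC matrix $\mI_H^h\mA_H^{-1}(\mI_H^h)^\mathcal T\mA$ with eigenvalue $1$ or $0$ according to whether $\vw_i$ lies in $\mathcal R(\mI_H^h)$, deduce the eigenvalues $\alpha\eta_i+1-\alpha$ versus $\alpha\eta_i$ of $\mW_\alpha$, and take the max/min ratio using positivity. The only cosmetic difference is that you diagonalize $\mA_H$ explicitly via orthonormality, whereas the paper gets the same conclusion directly from the cancellation $\mA_H^{-1}\bigl[(\mI_H^h)^\mathcal T\mA\mI_H^h\bigr]=\mI$.
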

\begin{proof}
See \Cref{proof:lemma_3:idealProRest}.
\end{proof}
We see that the second term in $\mA_\alpha$, which corresponds to the direction provided by the CGC, increases the eigenvalues associated with the columns of the prolongation by $1-\alpha$. It thus follows from \eqref{eq:Kappa} that, to obtain any advantage at all from the CGC direction in reducing $\kappa$, the eigenvector associated with the smallest $a_i$ must be included amongst the columns of the prolongation, and therefore $ a_{cmin} \leq  a_{fmin}$. Similarly, considering the numerator in \eqref{eq:Kappa}, it is clearly advantageous that the eigenvector corresponding to the largest $ a_i$ not be included in the range of the prolongation. With these assumptions, we obtain the following result for the optimal $\alpha$ which minimizes $\kappa$.
\begin{theorem} \label{theorem:OptimalAlpha}
Assume $ a_{cmin} \leq  a_{fmin}$ and $ a_{cmax} \leq  a_{fmax}$. Then, the condition number $\kappa$ is minimized by choosing
$$
 \alpha_{opt} \triangleqtemp  \frac{1}{1+ a_{fmin}- a_{cmin}} \leq 1,
$$
resulting in the optimal $\kappa$,
\e \label{eq:KappaOpt}
\kappa_{opt} \triangleqtemp \left\{ \begin{array}{ll} \frac{a_{fmax}}{a_{fmin}} & {\rm if}~ a_{fmax}-a_{fmin} \geq  a_{cmax} -  a_{cmin}, \\ 1 + \frac{a_{cmax}-a_{cmin}}{a_{fmin}} & {\rm otherwise.} \end{array}\right.
\ee
\end{theorem}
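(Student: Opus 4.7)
My plan is to treat $\kappa(\alpha)=M(\alpha)/m(\alpha)$, with
\begin{equation*}
M(\alpha)=\max\bigl(\alpha\eta_{fmax},\,1+\alpha(\eta_{cmax}-1)\bigr),\qquad m(\alpha)=\min\bigl(\alpha\eta_{fmin},\,1+\alpha(\eta_{cmin}-1)\bigr),
\end{equation*}
as a piecewise rational function on $(0,1]$, using the eigenvalue formula for $\mW_\alpha$ already established in \cref{lem:ConditionNumber}. Since $M$ and $m$ are the max and min of two affine functions of $\alpha$, the only breakpoints are the crossover values
\begin{equation*}
\alpha_1=\frac{1}{1+\eta_{fmax}-\eta_{cmax}},\qquad \alpha_2=\frac{1}{1+\eta_{fmin}-\eta_{cmin}}=\alpha_{opt}.
\end{equation*}
At $\alpha=0^+$ the ``coarse'' branch $1+\alpha(\eta_{cmax}-1)$ equals $1$ while $\alpha\eta_{fmax}=0$, whereas at $\alpha=1$ the hypothesis $\eta_{fmax}\ge\eta_{cmax}$ reverses the inequality; hence $M(\alpha)$ equals the coarse branch on $(0,\alpha_1]$ and the fine branch on $[\alpha_1,1]$. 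The analogous argument using $\eta_{cmin}\le\eta_{fmin}$ shows $m(\alpha)=\alpha\eta_{fmin}$ on $(0,\alpha_2]$ and $m(\alpha)=1+\alpha(\eta_{cmin}-1)$ on $[\alpha_2,1]$, which also gives $\alpha_{opt}\le 1$ at once.

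Next I would compute the monotonicity of $\kappa$ on the three sub-intervals produced by $\alpha_1$ and $\alpha_2$. On the left-most piece, where $M=1+\alpha(\eta_{cmax}-1)$ and $m=\alpha\eta_{fmin}$, dividing gives
\begin{equation*}
\kappa(\alpha)=\frac{1}{\alpha\eta_{fmin}}+\frac{\eta_{cmax}-1}{\eta_{fmin}},
\end{equation*}
which is strictly decreasing in $\alpha$. On the right-most piece, where $M=\alpha\eta_{fmax}$ and $m=1+\alpha(\eta_{cmin}-1)$, the quotient rule yields $\kappa'(\alpha)=\eta_{fmax}\bigl(1+\alpha(\eta_{cmin}-1)\bigr)^{-2}>0$, so $\kappa$ is strictly increasing. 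On a middle piece where both $M$ and $m$ take their fine forms, $\kappa\equiv\eta_{fmax}/\eta_{fmin}$ is constant; on a middle piece where both take their coarse forms, a similar computation gives $\kappa'(\alpha)=(\eta_{cmax}-\eta_{cmin})\bigl(1+\alpha(\eta_{cmin}-1)\bigr)^{-2}\ge 0$, so $\kappa$ is non-decreasing.

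The argument then splits according to whether $\alpha_1\le\alpha_2$ or $\alpha_1>\alpha_2$, which by the definitions of $\alpha_1,\alpha_2$ is exactly the dichotomy in \cref{eq:KappaOpt} between $\eta_{fmax}-\eta_{fmin}\ge\eta_{cmax}-\eta_{cmin}$ and its opposite. In the first case the three pieces read ``strictly decreasing on $(0,\alpha_1]$, constant equal to $\eta_{fmax}/\eta_{fmin}$ on $[\alpha_1,\alpha_2]$, strictly increasing on $[\alpha_2,1]$,'' so $\alpha_{opt}=\alpha_2$ is a minimizer with $\kappa_{opt}=\eta_{fmax}/\eta_{fmin}$. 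In the second case the three pieces read ``strictly decreasing on $(0,\alpha_2]$, non-decreasing on $[\alpha_2,\alpha_1]$, strictly increasing on $[\alpha_1,1]$,'' so the minimum is again attained at $\alpha_{opt}=\alpha_2$; plugging $\alpha=\alpha_{opt}$ into the expression $\kappa(\alpha)=(1+\alpha(\eta_{cmax}-1))/(\alpha\eta_{fmin})$ valid on the left-most piece and using $1/\alpha_{opt}=1+\eta_{fmin}-\eta_{cmin}$ gives $\kappa_{opt}=1+(\eta_{cmax}-\eta_{cmin})/\eta_{fmin}$.

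The main obstacle is really just bookkeeping: one has to keep track of which branch of $M$ and which branch of $m$ is active in each sub-interval and verify that the decreasing region always lies at or to the left of $\alpha_{opt}$ and the non-decreasing region always at or to the right, independent of the ordering of $\alpha_1$ and $\alpha_2$. Once the piecewise structure of $M/m$ is laid out and the three derivative signs are computed, the closed form \cref{eq:KappaOpt} for $\kappa_{opt}$ drops out of direct substitution.
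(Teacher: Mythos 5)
Your proposal is correct and follows essentially the same route as the paper: your $\alpha_1$ is the paper's $\alpha_{top}$, and the three-regime piecewise monotonicity analysis (with the same derivative signs) is exactly what the paper does in the case $\alpha_{top}>\alpha_{opt}$. The only cosmetic difference is that for the case $\eta_{fmax}-\eta_{fmin}\ge\eta_{cmax}-\eta_{cmin}$ the paper invokes the lower bound $\eta_{fmax}/\eta_{fmin}$ being attained on $[\alpha_{top},\alpha_{opt}]$ rather than re-deriving monotonicity on all three pieces, which changes nothing substantive.
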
 
\begin{proof}
See \Cref{proof:theoremOptalpha:idealProRest}.
\end{proof}
\begin{remark}
\label{remark:kappaOptTheoretic}
Notice that $\kappa_{opt}$ is either equal to $a_{fmax} /a_{fmin}$ or at most ``slightly larger'' because 
$$
1 + \frac{a_{cmax}-a_{cmin}}{a_{fmin}} = \frac{a_{fmax}}{a_{fmin}} + 1 - \frac{a_{fmax}-a_{cmax}+a_{cmin}}{a_{fmin}} < \frac{a_{fmax}}{a_{fmin}} + 1 \, .
$$
That is, even in the regime $a_{fmax} - a_{fmin} < a_{cmax} - a_{cmin}$, the optimal condition number $\kappa_{opt}$ is increased by less than 1. Note that $\kappa =  a_{fmax} /  a_{fmin}$ yields a convergence factor (with no history) of $\mu = \frac{\kappa-1}{\kappa+1}$, which matches that of the classical TG algorithm with optimally weighted Richardson relaxation followed by CGC.  In \Cref{sec:connection_h_Ellipticity}, we will show the connection of $\kappa_{opt}$ presented here with the so-called $h$-ellipticity measure, which is a qualitative criterion for the existence of local smoothers for a given elliptic PDE \cite{trottenberg2000multigrid}. Finally, we note that the optimal prolongation is obtained by choosing the columns of $I_H^h$ to be the eigenvectors associated with the smallest eigenvalues of $\mA$ (similarly to the classical TG case \cite{falgout2005two}). This clearly minimizes the ratio $ a_{fmax} /  a_{fmin}$. Furthermore, this choice yields $ a_{cmax} \leq  a_{fmin}$, so if $ a_{fmax} - a_{fmin} <  a_{cmax} -  a_{cmin}$ (so $\kappa_{opt} >  a_{fmax} /  a_{fmin}$), then we obtain $\kappa_{opt} = 1 + \frac{a_{cmax} -  a_{cmin}}{a_{fmin}}< 2$.
\end{remark}

\subsection{Optimizing the condition number of \texorpdfstring{${\mA}_\alpha$}{TEXT} in practice}\label{sec:sub:optCondPractice}
As our analysis indicates, optimizing $\alpha$ so as to minimize $\kappa$ is a crucial step towards obtaining an optimal convergence factor. In general, the complexity of minimizing $\kappa$ is very high and the explicit results of \Cref{sec:subInsightsofOptCond} are only valid when the prolongation is comprised of eigenvectors of $\mA$, which is impractical, because the prolongation needs to be very sparse for efficiency. 
In certain cases, such as when $\mA$ results from discretizing elliptic PDEs with constant or slowly varying coefficients, Local Fourier Analysis (LFA) \cite{brandt1977multi,wienands2004practical} can be used in conjunction with the analysis of \Cref{sec:subInsightsofOptCond} to yield effective approximate results.

LFA, also called Local Mode Analysis, is a useful quantitative tool for estimating the asymptotic convergence factor ACF of MG methods for elliptic PDEs with constant coefficients. We describe LFA here briefly, and for further details refer the reader to standard multigrid textbooks or to \cite{wienands2004practical}, a comprehensive book on LFA for MG. We consider the two-dimensional case for simplicity. For $\mA$ that is a discretization of a constant-coefficient elliptic PDE on an infinite or doubly periodic domain of uniform mesh-size $h$, grid-based functions of the form $\psi(\vtheta,x,y)=e^{\iota \theta_1 x/h}e^{\iota \theta_2 y/h}$ with $\vtheta=(\theta_1,\theta_2)\in [-\pi,\pi)^2$ and $\iota = \sqrt{-1}$, are eigenfunctions of $\mA$. Furthermore, if standard shift-invariant prolongation is used, such as bilinear or bicubic interpolation, then appropriate subsets of dimension four of the eigenfunctions $\psi(\vtheta,x,y)$ form subspaces that are invariant under multiplication by $\mA_\alpha$ (as in standard two-level analysis). The upshot is that we can compute the eigenvalues of $\mA_\alpha$ at a fairly moderate cost, and use linesearch to find $\alpha$ which optimizes the condition number of $\mA_\alpha$. This approach is demonstrated later in numerical examples, including the option of saving computational cost by optimizing $\alpha$ on relatively coarse grids.        

We can furthermore obtain a very cheap approximation to the optimal $\alpha$ and $\kappa$ by making the simplifying assumptions that are commonly used in computing the so-called smoothing factor of relaxation (known as smoothing analysis) as follows. Denote by {$a(\vtheta)$} the eigenvalue of $\mA$ associated with the grid-function $\psi(\vtheta,x,y)$, and partition the $\vtheta$ domain into low- and high-frequencies as in \Cref{definition:highlowcomp}. 

\begin{definition}[Low- and High-frequency components]\label{definition:highlowcomp}
\en
\begin{array}{rcl}
a(\vtheta) ~~ \text{is a low-frequency component}& \Longleftrightarrow & \vtheta\in T^{\text{low}}:=\left[-\frac{\pi}{2},\frac{\pi}{2}\right)^2,\\
a(\vtheta) ~~ \text{is a high-frequency component}& \Longleftrightarrow & \vtheta\in T^{\text{high}}:=\left[-\pi,\pi\right)^2 \setminus \left[-\frac{\pi}{2},\frac{\pi}{2}\right)^2.
\end{array}
\een
\end{definition} 

\noindent
Smoothing analysis simplifies by assuming that the CGC acts as a projection onto the high-frequency subspace, that is, it has no effect on high-frequency error components, while it eliminates exactly low-frequency error components. Under these simplifying assumptions, we obtain
\begin{equation}
\label{eq:eta:highlowFreqFourierModes}
\begin{array}{r@{}l@{}r@{}l}
 a_{fmax}  = &{} \max_{\vtheta\in T^{\text{high}}} a(\vtheta),~~~~&{}
 a_{fmin}  =  &{}\min_{\vtheta\in T^{\text{high}}}  
 a(\vtheta),\\
 a_{cmax}  = &{} \max_{\vtheta\in T^{\text{low}}}  a(\vtheta),~~~~&{}
 a_{cmin}  =  &{}\min_{\vtheta\in T^{\text{low}}} 
 a(\vtheta).
\end{array}
\end{equation}
Moreover, the eigenvalues of the second term in $\mA_\alpha$ are given by $\gamma(\vtheta) \triangleqtemp \frac{a(\vtheta)}{a_H(2\vtheta)}$ with $a_H(2\vtheta)$ the eigenvalues of $\mA_H$ for $\vtheta\in T^{\text{low}}$ \cite{yavneh1998coarse}. Now, we can write $\kappa$ as:
\e \label{eq:KappaSmoothAnalysis}
\kappa = \frac{\max\left(\alpha  a_{fmax}, ~\alpha  a_{cmax}+\left(1-\alpha\right)\gamma_{max}(\vtheta)\right)}{\min\left(\alpha  a_{fmin}, ~\alpha  a_{cmin}+\left(1-\alpha\right)\gamma_{min}(\vtheta)\right)},
\ee
where $\gamma_{max}(\vtheta)$ and $\gamma_{min}(\vtheta)$ denote the maximum and minimum value of $\gamma(\vtheta)$, respectively. Similarly to \Cref{theorem:OptimalAlpha}, the optimal $\alpha$ is given by 
\e
\label{eq:alphaoptSmoothAnaly}
\alpha_{opt}=\frac{1}{1+\frac{a_{fmin}- a_{cmin}}{\gamma_{min}(\vtheta)}}.
\ee
We note that, for elliptic PDEs, the accuracy of using the approximations \eqref{eq:eta:highlowFreqFourierModes} and $\gamma(\vtheta)$ depend on the order of intergrid transfers \cite{hemker1990order,yavneh1998coarse}. In the following numerical tests, we evaluate experimentally the accuracy of using \eqref{eq:alphaoptSmoothAnaly} to estimate the fixed stepsizes in practice.

\subsection{A connection with the $h$-ellipticity measure}\label{sec:connection_h_Ellipticity}
Using \Cref{definition:highlowcomp}, we define an ``idealized'' MG method as a two-grid method that affects high-frequency error components only on the fine grid and eliminates all low-frequency error components via the CGC, corresponding to \eqref{eq:eta:highlowFreqFourierModes}. Then, the ACF of an idealized SESOP-TG-$1$ is $\frac{\sqrt{\kappa_{opt}}-1}{\sqrt{\kappa_{opt}}+1}$ with $\kappa_{opt} = \frac{a_{fmax}}{a_{fmin}}$, corresponding to the discussion of \Cref{sec:subInsightsofOptCond} for ill-conditioned problems. Alternatively, we can express this idealized ACF in terms of $E_h(\mA) = \frac{1}{\kappa_{opt}}$, which is known in the literature as the $h$-ellipticity measure, obtaining $r=\frac{1-\sqrt{E_h}}{1+\sqrt{E_h}}$. For SESOP-TG-$0$, in contrast, the ACF becomes $r = \frac{1-E_h}{1+E_h}$, which is the well-known smoothing factor of optimally damped Jacobi relaxation for symmetric problems \cite{trottenberg2000multigrid}.    



\subsection{Numerical tests---continued}
\label{sec:sub:NumericalTestsContinued}
In this section, we first test the accuracy of using \eqref{eq:IterativeMatrixUpsilon} to predict the ACF of SESOP-TG-$1$ with fixed stepsizes. Then, compare the ACF with minimization over the subspace to that obtained with optimized fixed stepsizes. Finally, we examine the performance of the two proposed heuristic methods (cf. \Cref{sec:sub:optCondPractice}) for estimating the fixed stepsizes. The rotated anisotropic diffusion problem is chosen as the test problem,
\e
u_{ss}+\epsilon u_{tt} = f , \label{eq:anis_diff_con}
\ee
where $ u_{ss}$ and $u_{tt}$ denote the second partial derivatives of $u$ in the $(s,t)$ coordinate system. Denote by $\phi$ the angle between $(s,t)$ and the grid-aligned coordinate system, $(x,y)$. We rewrite \eqref{eq:anis_diff_con} as
\e
(C^2+\epsilon S^2) u_{xx}+2(1-\epsilon)CS  u_{xy}+(\epsilon C^2+S^2) u_{yy}=f, \label{eq:anis_diff_con:xy}
\ee
where $C=\cos\phi$ and $S=\sin\phi$. Using a nine-point stencil,
$$\bmat -\frac{1}{2}(1-\epsilon)CS&\epsilon C^2+S^2& \frac{1}{2} (1-\epsilon)CS\\ C^2+\epsilon S^2& -2(1+\epsilon) & C^2+\epsilon S^2\\ \frac{1}{2}(1-\epsilon)CS &\epsilon C^2+S^2& -\frac{1}{2}(1-\epsilon)CS \emat,
$$
to discretize \eqref{eq:anis_diff_con:xy} on a uniform grid with mesh-size $h$ and prescribed boundary conditions, we get a linear system of the form
\begin{equation}
\mA^h\vu^h=\vf^h.\label{eq:discretization:linear}
\end{equation}



\subsection*{ACF Prediction}
In our first test, we simply choose $c_3 = 1$, and $\kappa = \frac{1}{E_h}$, and then use \Cref{lemma_2:determ:c_23,lemma_3:determ:c_1} to compute $c_1$ and $c_2$. The resulting algorithm is referred to as ``SESOP-TG-$1$-Fixed''. We discretize \eqref{eq:anis_diff_con:xy} on a $256\times256$ grid, imposing Dirichlet boundary conditions, and we employ bilinear prolongation and full-weighted residual transfers. The coarse-grid operators are defined by direct rediscretization. Finally, we set $\mathcal P=\mI$, i.e., no preconditioning. The ACF achieved in practice by SESOP-TG-$1$-Fixed is evaluated as the geometric mean of the convergence factor per iteration in the last $5$ iterations, which are terminated at $500$ iterations or when the residual norm is smaller than $10^{-8}$, whichever comes first. The convergence factor at the $k$th iteration is defined by the ratio of the successive residual norms $\|\vr_k^h\|_2/\|\vr_{k-1}^h\|_2$, where $\vr_k^h=\vf^h-\mA^h\vu_k^h$ and $\vu_k^h$ is the approximate solution at the $k$th iteration. In \Cref{fig:thore:prac:rotated}, we see that the ACF predicted by the spectral radius of $\bm\Upsilon$ in \eqref{eq:IterativeMatrixUpsilon} matches the practical results well.
\begin{figure}[!htb]
	\centering
	\subfigure[$\phi=0$]{\centering \includegraphics[width=0.45\textwidth]{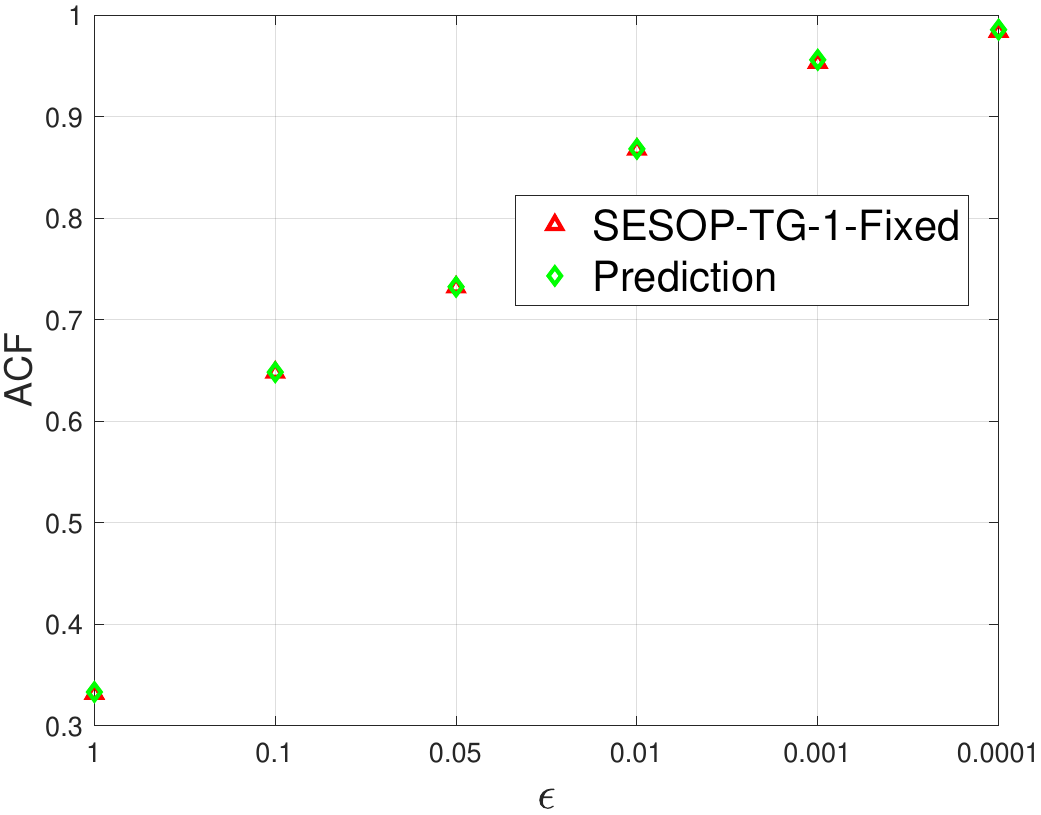}\label{fig:thore:prac:rotated:1}}
	\subfigure[$\phi=\frac{\pi}{100}$]{\centering \includegraphics[width=0.45\textwidth]{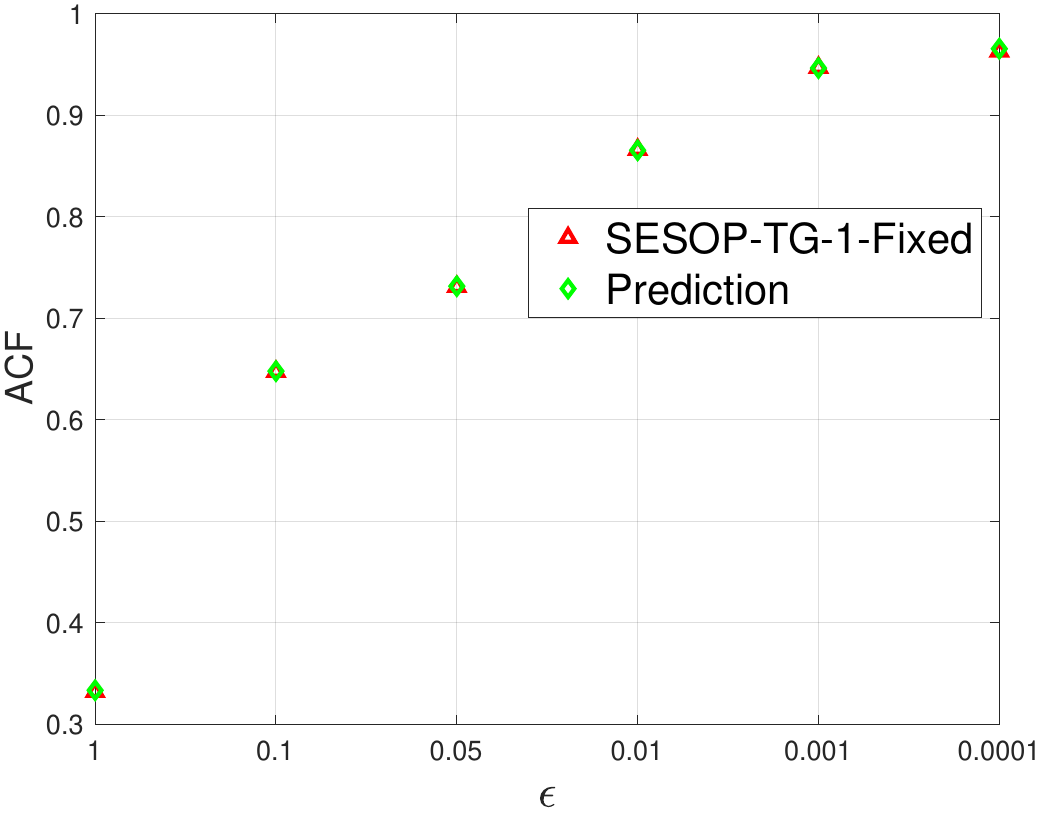}\label{fig:thore:prac:rotated:2}}\\
	\subfigure[$\phi=\frac{\pi}{5}$]{\centering \includegraphics[width=0.45\textwidth]{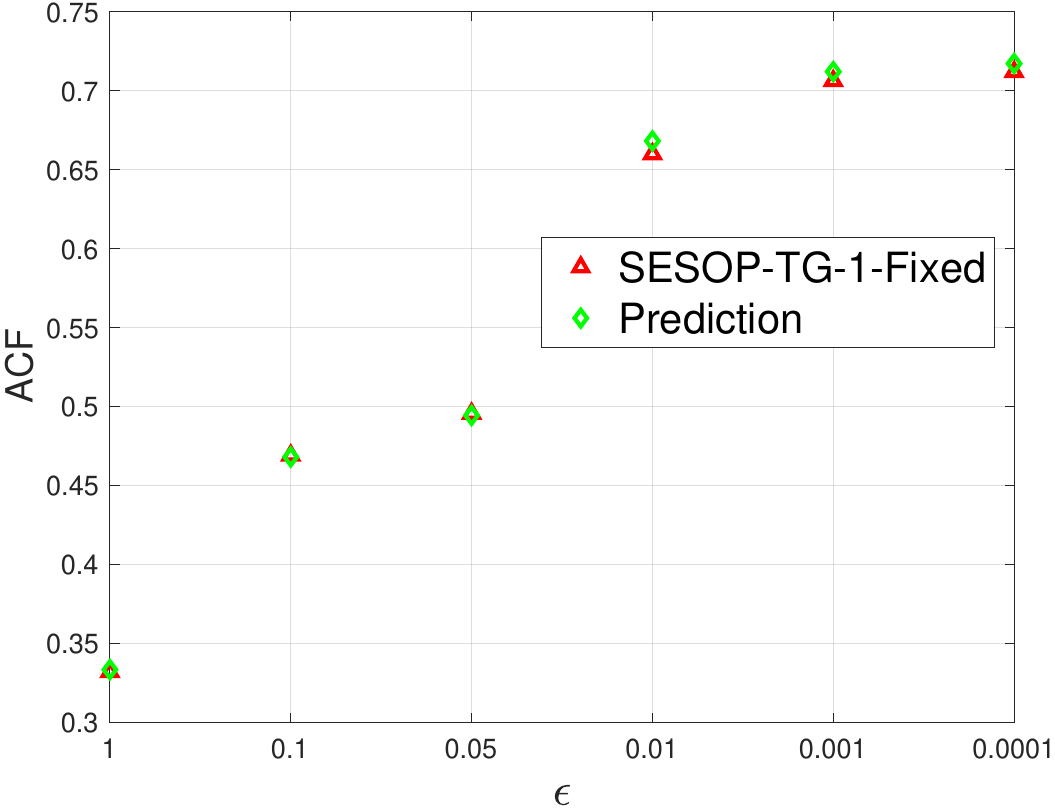}\label{fig:thore:prac:rotated:3}}
	\subfigure[$\phi=\frac{\pi}{4}$]{ \centering \includegraphics[width=0.45\textwidth]{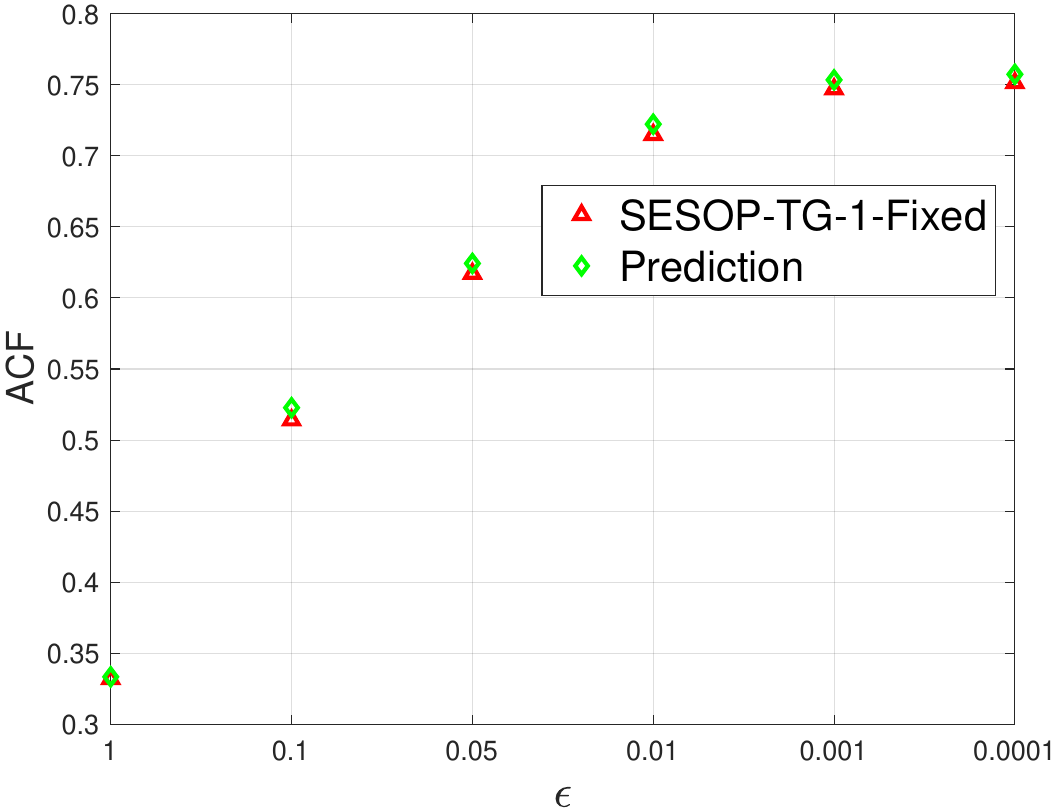}\label{fig:thore:prac:rotated:4}}
	\caption{Comparison of the predicted ACF to the convergence factor achieved in practice.}\label{fig:thore:prac:rotated}
\end{figure}

\subsection*{Comparison between subspace minimization and optimal fixed stepsizes} 
Next, we compare the ACFs of three different approaches to determining the fixed stepsizes: 1) SESOP-TG-$1$-Fixed defined above; 2) subspace minimization (classical SESOP); 3) optimized fixed stepsizes. The optimized stepsizes are obtained by minimizing the true condition number $\kappa$ of $\mA_\alpha$ in \eqref{eq:A_Alpha} by employing linesearch over $\alpha$. The test problem remains unchanged, except that we test both bilinear and bicubic prolongations and use $64\times64$ grids. The ACF achieved in practice is estimated as above by the geometric mean of the last $5$ iterations when the algorithm reaches $500$ iterations or the residual norm is smaller than $10^{-8}$. Note also that $\kappa_{opt}$ of \Cref{sec:connection_h_Ellipticity}, i.e., one over the $h$-ellipticity measure, is used here to represent the idealized ACF for comparison.

From \Cref{tab:conv:rotated:h:tg}, we clearly see that optimized fixed stepsizes yield a lower ACF than SESOP-TG-$1$-Fixed for the rotated anisotropic diffusion problem, and in fact its ACF is at least as good as that obtained by subspace minimization. This suggests that, if we can estimate the optimal stepsizes efficiently, then we can significantly reduce computation time because the subspace minimization, in general, is expensive. Also, we see that bicubic prolongation yields a lower ACF than bilinear prolongation, consistent with \cite{hemker1990order}. 

The relative disadvantage of SESOP-TG-$1$-Fixed for rotated anisotropic diffusion stems mainly from fixing $c_3 = 1$, whereas the optimal $c_3$ is higher for this problem, consistent with the analysis of \cite{yavneh1998coarse}. 

\begin{table}
\caption{Comparison of the ACF of SESOP-TG-$1$-Fixed (TG-1), classical subspace minimization (SESOP), and optimized stepsizes (Opt). The idealized convergence factor of \Cref{sec:connection_h_Ellipticity}, based on the $h$-ellipticity measure, is included as a benchmark (Idealized).}
\label{tab:conv:rotated:h:tg}
\begin{center}
\begin{tabular}{||c|c||c|c|c|c|c|c|c||} \hline\hline
\multirow{2}{*}{$\phi$} & \multirow{2}{*}{$\epsilon$} & \multicolumn{3}{c|}{Bilinear} &  \multicolumn{3}{c|}{Bicubic}& \multirow{2}{*}{Idealized} \\ \cline{3-8}
&  & TG-1 & SESOP & Opt & TG-1 & SESOP & Opt & \\ 
\hline\hline

 $0$ & $1$ & $0.333$& $0.332$ &$0.332$& $0.333$ &  $0.333$ &  $0.331$   & $0.333$       
\\ \hline

$\frac{\pi}{6}$ & $10^{-3}$ & $0.669$& $0.561$ & $0.563$ &$0.587$&  $0.533$  &  $0.532$   & $0.587$       \\ \hline 

$\frac{\pi}{6}$ & $10^{-4}$ & $0.676$& $0.563$ & $0.565$ & $0.588$ & $0.534$  & $0.533$   &  $0.588$     \\ \hline

$\frac{\pi}{4}$ & $10^{-3}$ & $0.753$& $0.500$ & $0.535$ &$0.653$&   $0.454$ & $0.443$  & $0.446$    \\ \hline

$\frac{\pi}{4}$ & $10^{-4}$ & $0.757$& $0.502$ & $0.537$ & $0.658$& $0.451$  & $0.445$   &$0.446$     \\ \hline\hline

\end{tabular}
\end{center}
\end{table}

\subsection*{Optimizing $\kappa$ of $\mA_\alpha$ in practice} We next study the efficacy of the two heuristic methods proposed in \Cref{sec:sub:optCondPractice}---estimating $\kappa_{opt}$ on a coarse grid or using \eqref{eq:alphaoptSmoothAnaly}---to select the fixed stepsizes. 

In the following tests we use $1024\times 1024$ grids to discretize \eqref{eq:anis_diff_con:xy}. Denote by $r^{opt}_{Num}$ the ACF obtained by optimizing $\kappa$ of $\mA_\alpha$ on a $Num\times Num$ grid. Then, we define the ``Deterioration Factor'' (DF),
$$
DF(Num) \triangleqtemp \frac{\log r^{opt}_{1024}}{\log r^{opt}_{Num}} \, ,
$$
to measure the deterioration of the ACF incurred by optimizing $\kappa$ on a coarser grid. For example, $DF(Num)=2$ means that, asymptotically, it takes twice as many iterations of the algorithm which uses fixed stepsizes optimized on coarser grids to achieve the same error reduction as a single iteration with the true optimal stepsize.  Additionally, we compare between bilinear and bicubic interpolation \cite{hemker1990order}. In \Cref{fig:lowhighresoDiff}, we see $DF(Num)\approx 1.05$ for $Num\geq 128$, which means that determining the stepsizes on $128\times 128$ grids results in just a $\sim5\%$ asymptotic increase in the required iterations. Moreover, we observe that the use of higher order interpolation tends to reduce the DF.

\begin{figure}[!htb]
	\centering
	\subfigure[$\epsilon=10^{-3}$.]{ \centering\includegraphics[width=0.48\textwidth]{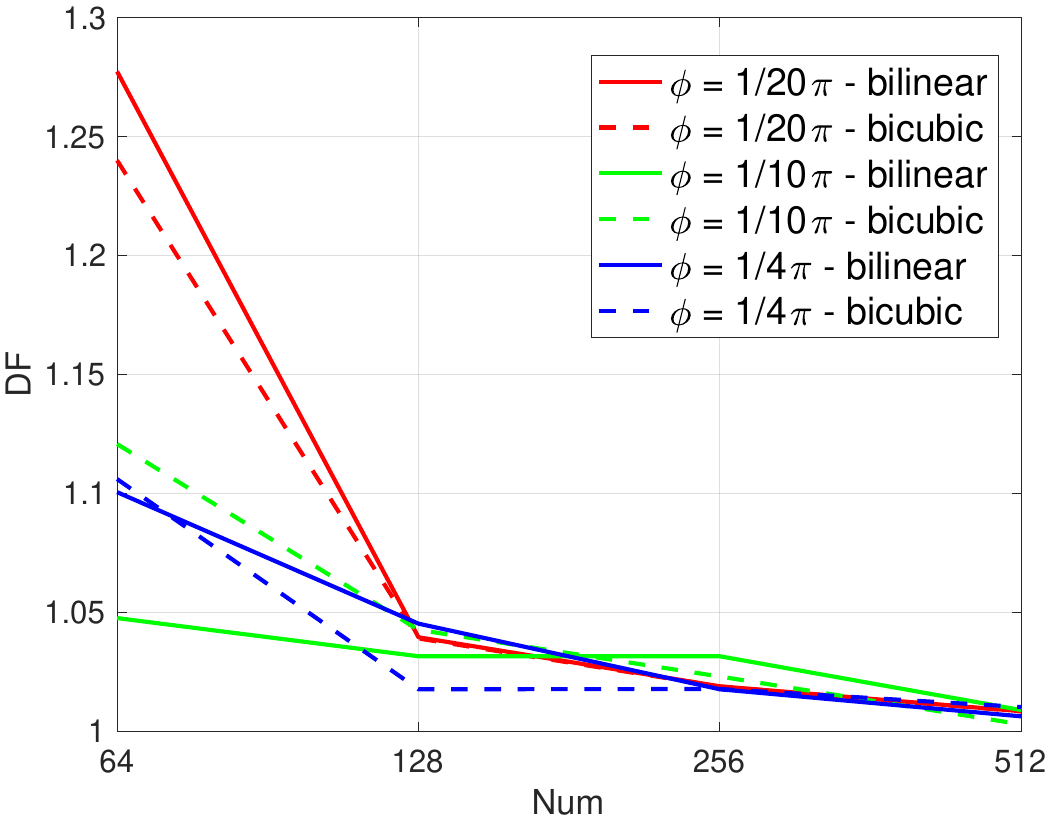}}
    \subfigure[$\phi=\frac{\pi}{30}$.]{ \centering\includegraphics[width=0.48\textwidth]{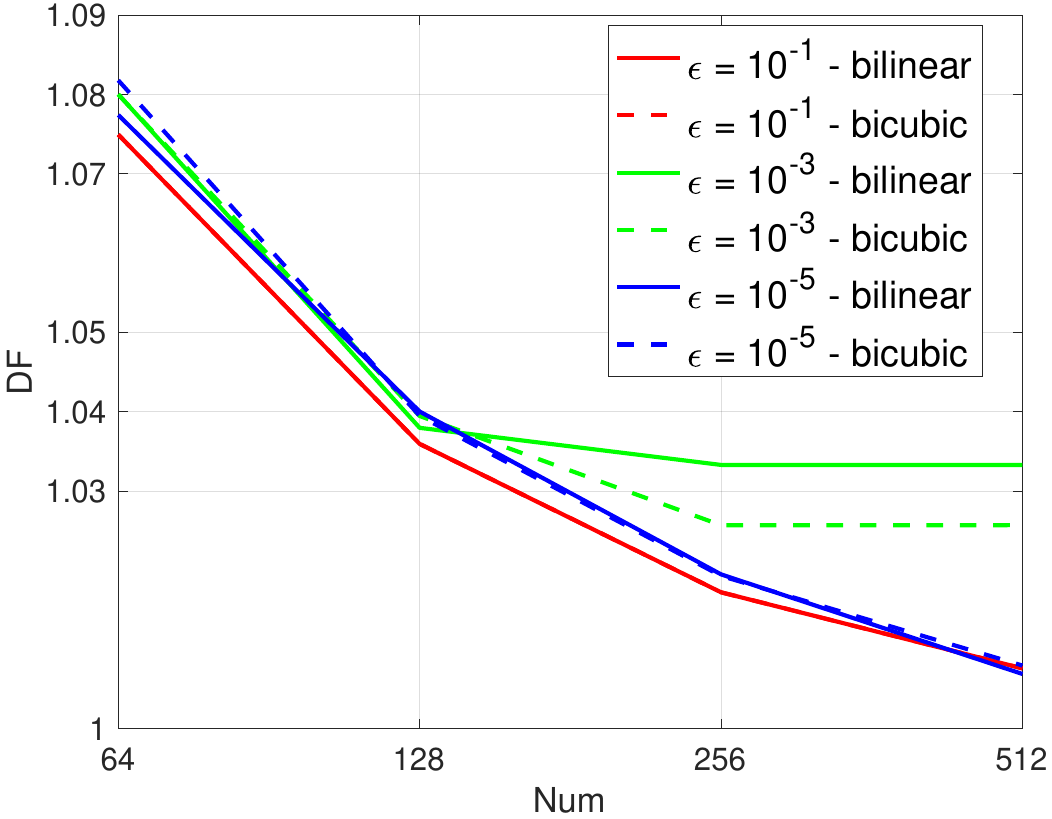}}
	\caption{$DF(Num)$ for various $\epsilon$ and $\phi$.}\label{fig:lowhighresoDiff}
\end{figure}

\begin{figure}[!htb]
    \centering
    \subfigure[$\epsilon=10^{-3},~\phi=\frac{\pi}{4}$]{\includegraphics[width=0.49\textwidth]{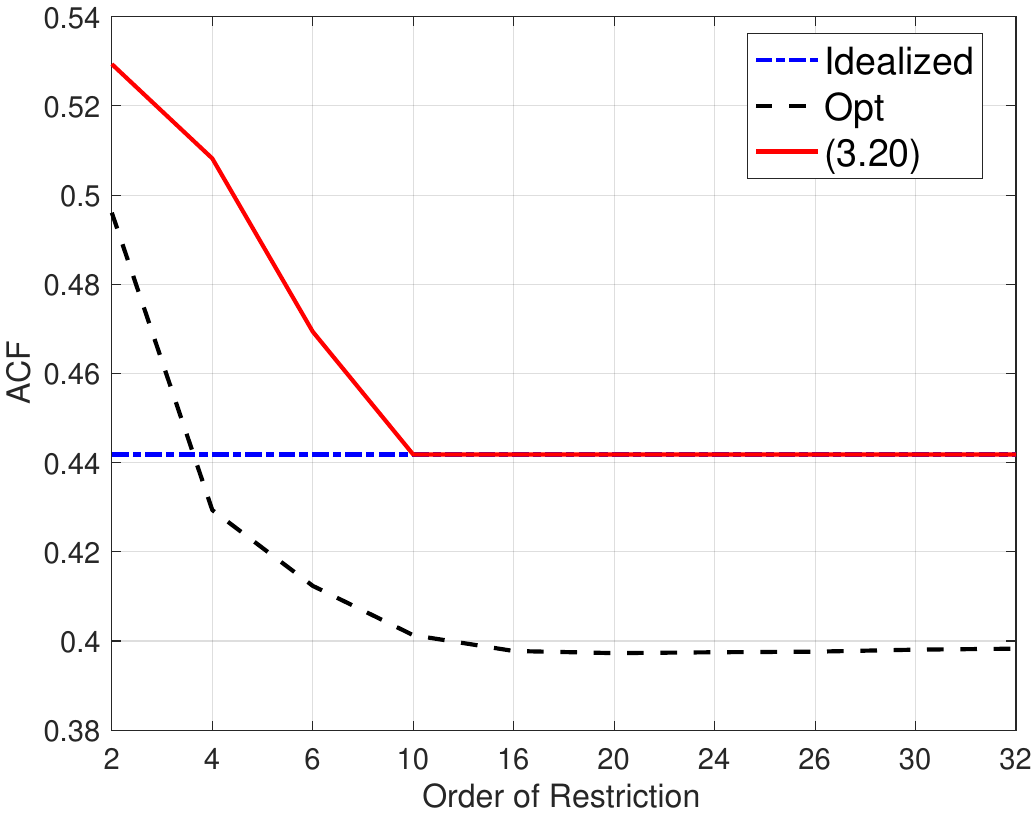}}
     \subfigure[$\epsilon=10^{-3},~\phi=\frac{2\pi}{5}$]{\includegraphics[width=0.49\textwidth]{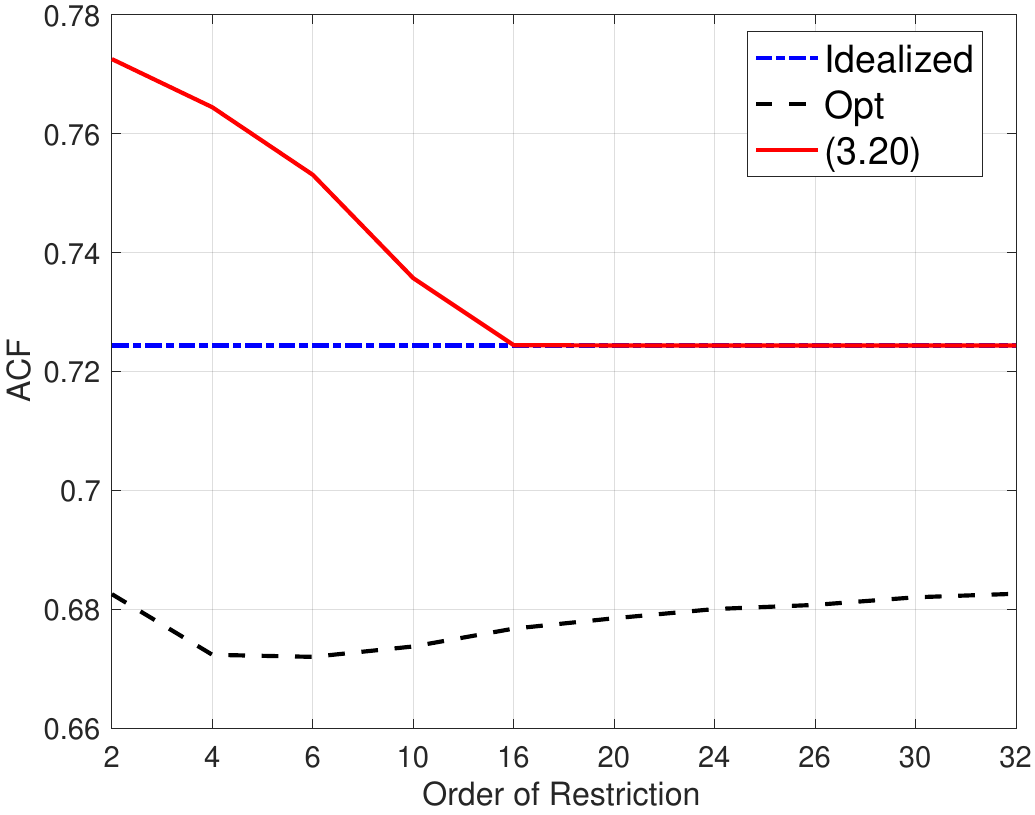}}
     
     \subfigure[$\epsilon=10^{-3},~\phi=\frac{\pi}{4}$]{\includegraphics[width=0.49\textwidth]{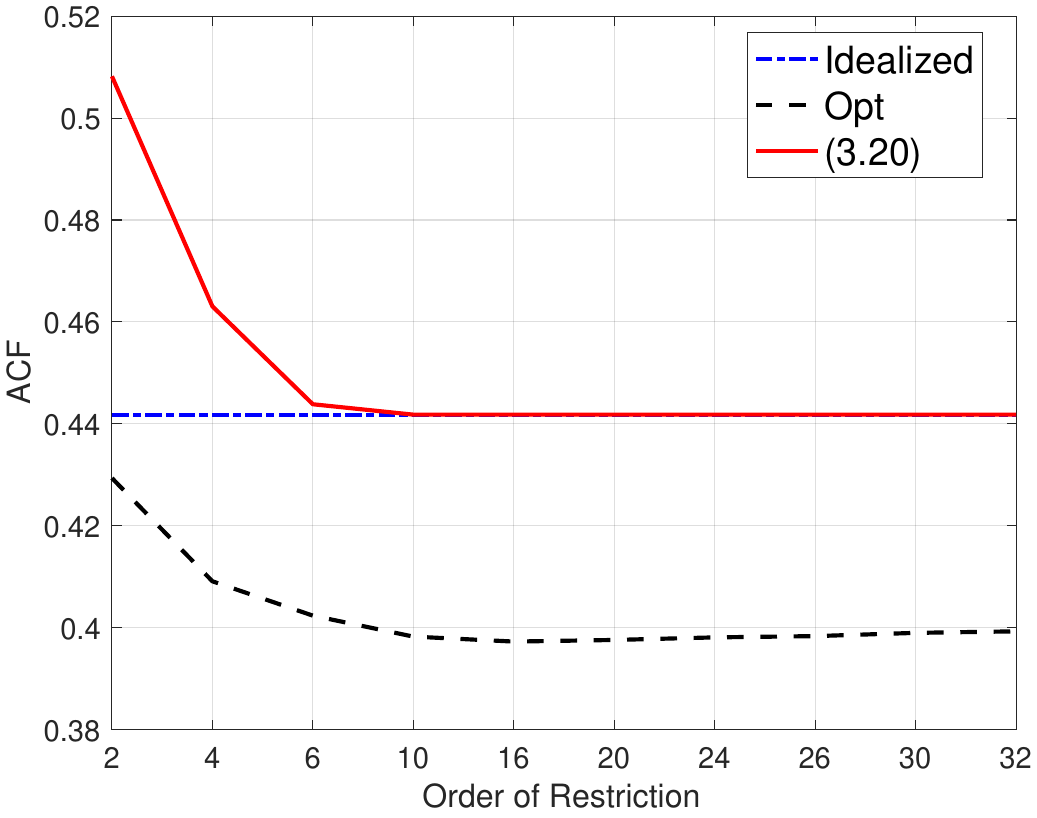}}
     \subfigure[$\epsilon=10^{-3},~\phi=\frac{2\pi}{5}$]{\includegraphics[width=0.49\textwidth]{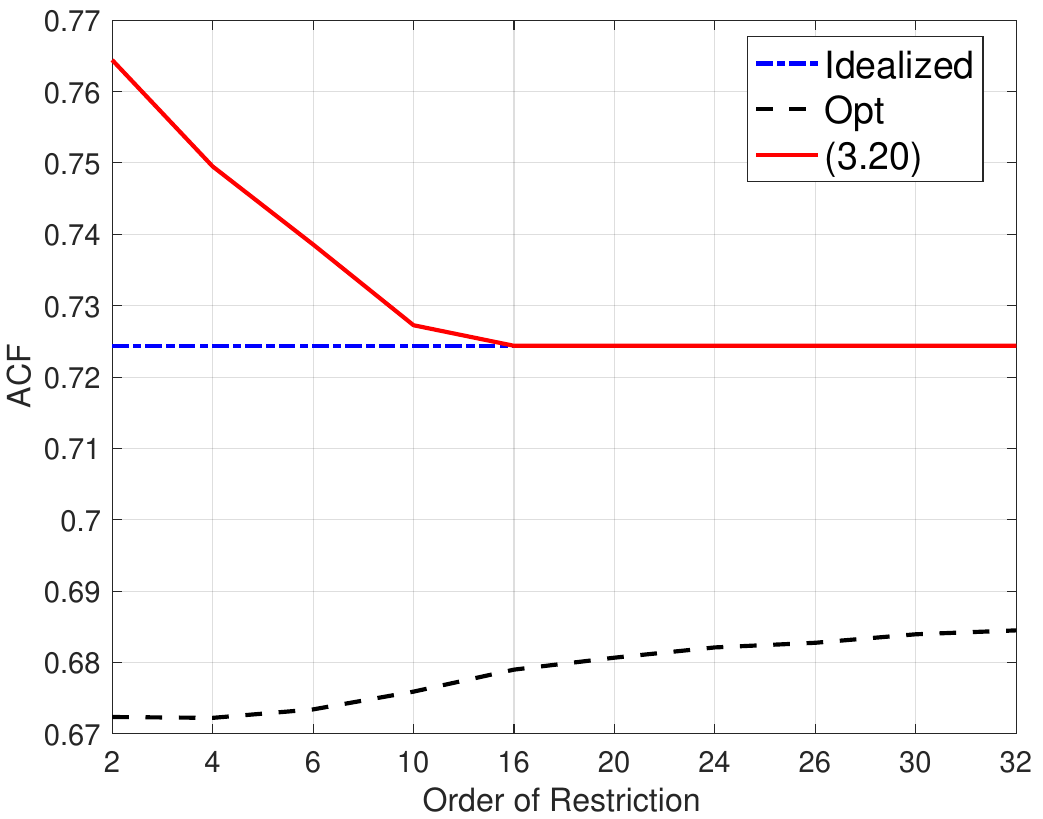}}
    \caption{Comparison of ACFs for varying order of intergrid transfers, using $512\times 512$ grids and Galerkin coarsening. First row: bilinear interpolation; Second row: bicubic interpolation. ``Idealized'' and ``Opt'' refer to \Cref{tab:conv:rotated:h:tg}.} 
    \label{fig:estimatedstepsizes}
\end{figure}


Now, we study the accuracy of using \eqref{eq:alphaoptSmoothAnaly} for selecting the fixed stepsizes.   From \cite[Equation (14)]{yavneh1998coarse}, we know that the accuracy of \eqref{eq:KappaSmoothAnalysis} for approximating $\kappa$ of $\mA_\alpha$ depends on the order of intergrid transfers. In \Cref{fig:estimatedstepsizes}, we show a comparison of ACFs with different orders of intergrid transfers. Note that the intergrid transfers used here have the same order for low and high-frequency modes. From \Cref{fig:estimatedstepsizes}, we find that \eqref{eq:alphaoptSmoothAnaly} becomes more accurate and very close to the idealized estimate when the order of restriction increases. This is due to the fact that higher order intergrid transfer operators filter out the high-frequency modes, and therefore the idealized assumptions are more closely satisfied. 
Note that the optimized algorithm is even better than the the idealized one, as also observed in \Cref{tab:conv:rotated:h:tg}, because the optimized algorithm takes into account all the modes globally when minimizing $\kappa$. Moreover, for the optimized version we see that increasing the order of the restriction begins increase the ACF slightly, bringing it closer to the idealized value. We finally note that the purpose of this test is to academically study the relationship between the accuracy of \eqref{eq:alphaoptSmoothAnaly} and the use of intergrid transfers and, in practice, it is not cost-effective to use very high order intergrid transfers.

\begin{remark}
	\label{remark:LFA_smoothingAnalysis:compare}
In this part we have examined the accuracy of two heuristic methods for determining the fixed stepsizes. In practice, the method shown in \Cref{fig:lowhighresoDiff} may be more attractive because the increase of iterations due to computing the stepsizes on a coarse grid is modest. However, for some practical problems,  \eqref{eq:alphaoptSmoothAnaly} is also attractive because its computation is cheaper than working on a coarse grid. Moreover, for many practical problems, we need to solve \eqref{eq:discretization:linear} with multiple $\vf^h$, and then the additional computation for selecting the stepsizes is negligible. 
\end{remark} 

\subsection*{Practical tests}
Now we study the performance of applying the multilevel version (V-cycle) of the proposed scheme to \eqref{eq:anis_diff_con:xy}. The multilevel version of  \Cref{alg:TG} is denoted by ``MG''. Using preconditioned conjugate gradients (PCG) with MG as the preconditioner is denoted by ``PCG-MG''. The fixed-stepsize version of SESOP-MG-$1$ is denoted by SESOP-MG-$1$-Opt when $\alpha_{opt}$ is computed on a coarse grid and by SESOP-MG-$1$-\eqref{eq:alphaoptSmoothAnaly} when \eqref{eq:alphaoptSmoothAnaly} $\alpha_{opt}$ is used. We use $1024\times 1024$ grids to discretize  \eqref{eq:anis_diff_con:xy} and obtain $\alpha_{opt}$ for SESOP-MG-$1$-Opt on $128\times 128$ grids. The coarse problems are obtained by direct rediscretization, and the bilinear prolongation and full-weighted residual transfers are employed. For coarse problems, we use Jacobi relaxation with optimal damping factor as estimated in \cite{yavneh1998multigrid} and $\nu_1=2$ and $\nu_2=1$. Note that on the finest level, we set $\nu_1=0$ and $\nu_2=0$ and only one evaluation of the gradient is allowed. 
 
 In \Cref{fig:practical test:examI:resi} ($\epsilon=1,\,\phi=0$), we see SESOP-MG-$1$ and PCG-MG perform identically. However, in \Cref{fig:practical test:examII:resi} ($\epsilon=10^{-3},\,\phi=\frac{\pi}{4}$), we find that SESOP-MG-$1$ becomes faster than PCG-MG. Indeed, in this case, it helps to scale the residual before restricting to the coarse for MG (also see the deterioration of MG in \Cref{fig:practical test:examII:resi}), but SESOP-MG-$1$ can automatically scale the residual via the subspace minimization \cite{brandt2011multigrid}. Moreover, from \Cref{fig:practical test:examI:resiCPU,fig:practical test:examII:resicpu}, we observe that SESOP-MG-$1$-Opt and SESOP-MG-$1$-\eqref{eq:alphaoptSmoothAnaly} are the fastest methods in terms of CPU time because these two methods inherit the effectiveness of SESOP-MG-$1$ but need much less CPU time. It is interesting to note that SESOP-MG-$1$-Opt and SESOP-MG-$1$-\eqref{eq:alphaoptSmoothAnaly} work almost the same in these two tests, demonstrating the effectivenss of the proposed strategies for determining the stepsizes. 

\begin{figure}[!htb]
\centering
	\subfigure[$\epsilon=1,\,\phi=0$]{\includegraphics[width=0.49\textwidth]{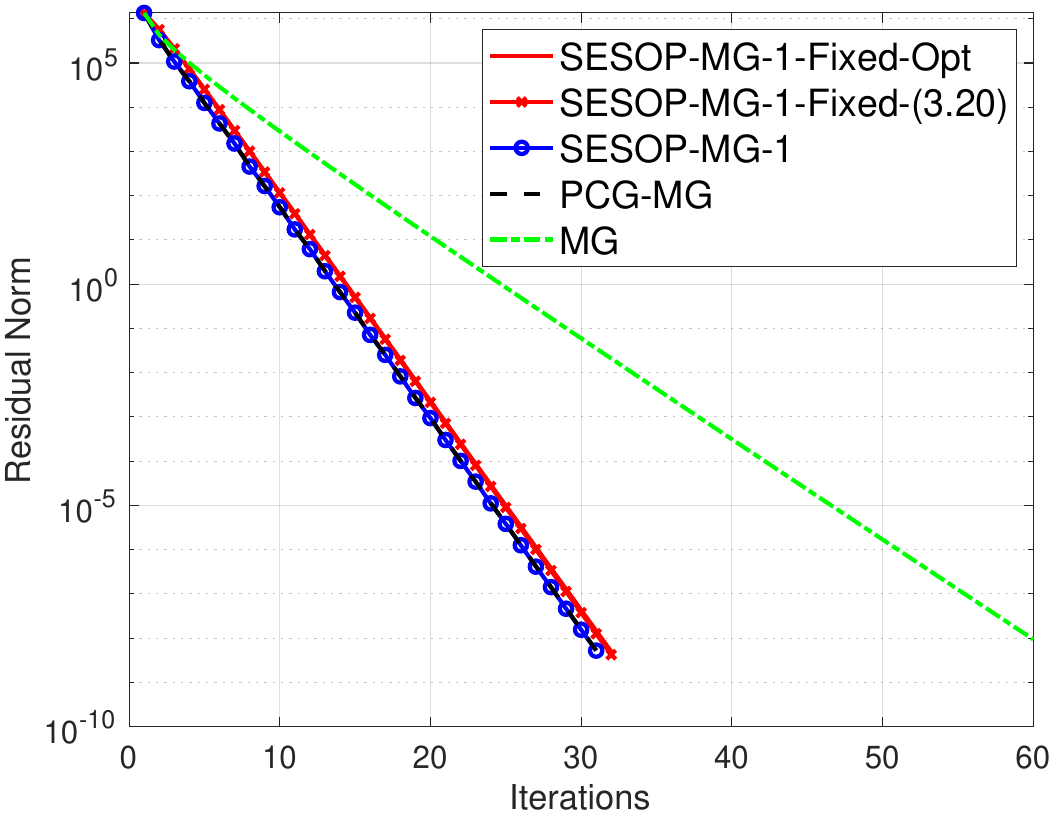}\label{fig:practical test:examI:resi}}
	\subfigure[$\epsilon=1,\,\phi=0$]{\includegraphics[width=0.49\textwidth]{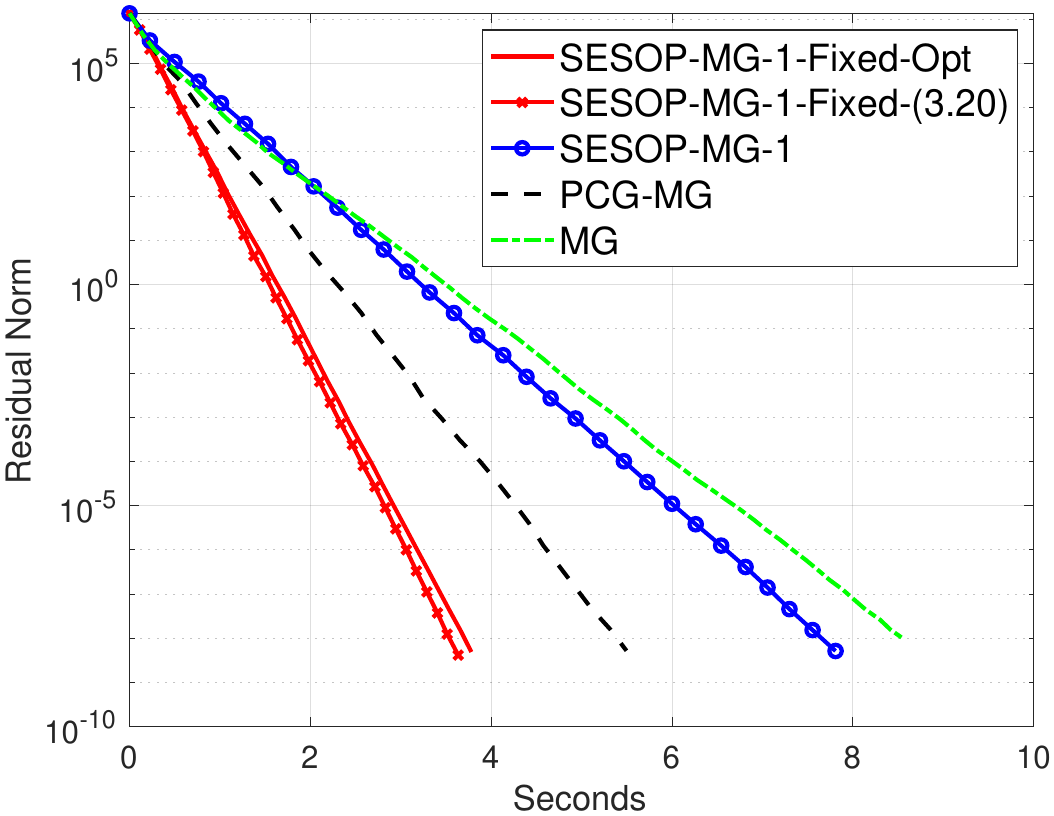}\label{fig:practical test:examI:resiCPU}}
	
		\subfigure[$\epsilon=10^{-3},\,\phi=\frac{\pi}{4}$]{\includegraphics[width=0.49\textwidth]{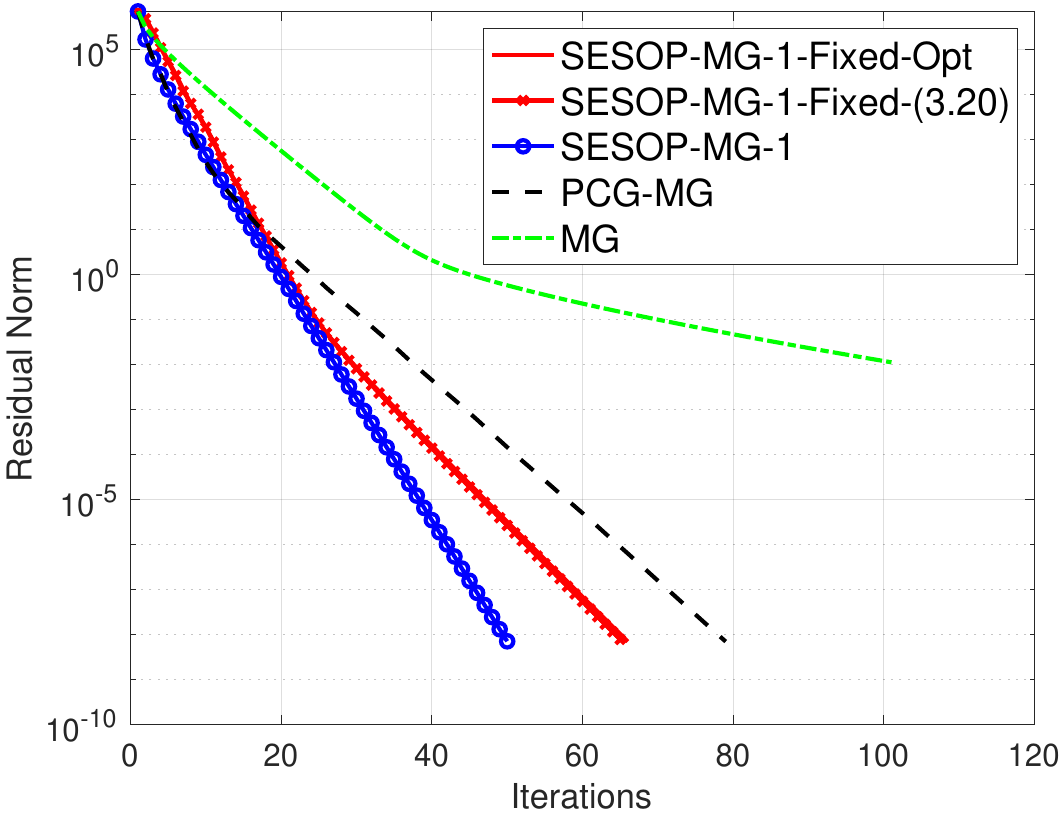}\label{fig:practical test:examII:resi}}
	\subfigure[$\epsilon=10^{-3},\,\phi=\frac{\pi}{4}$]{\includegraphics[width=0.49\textwidth]{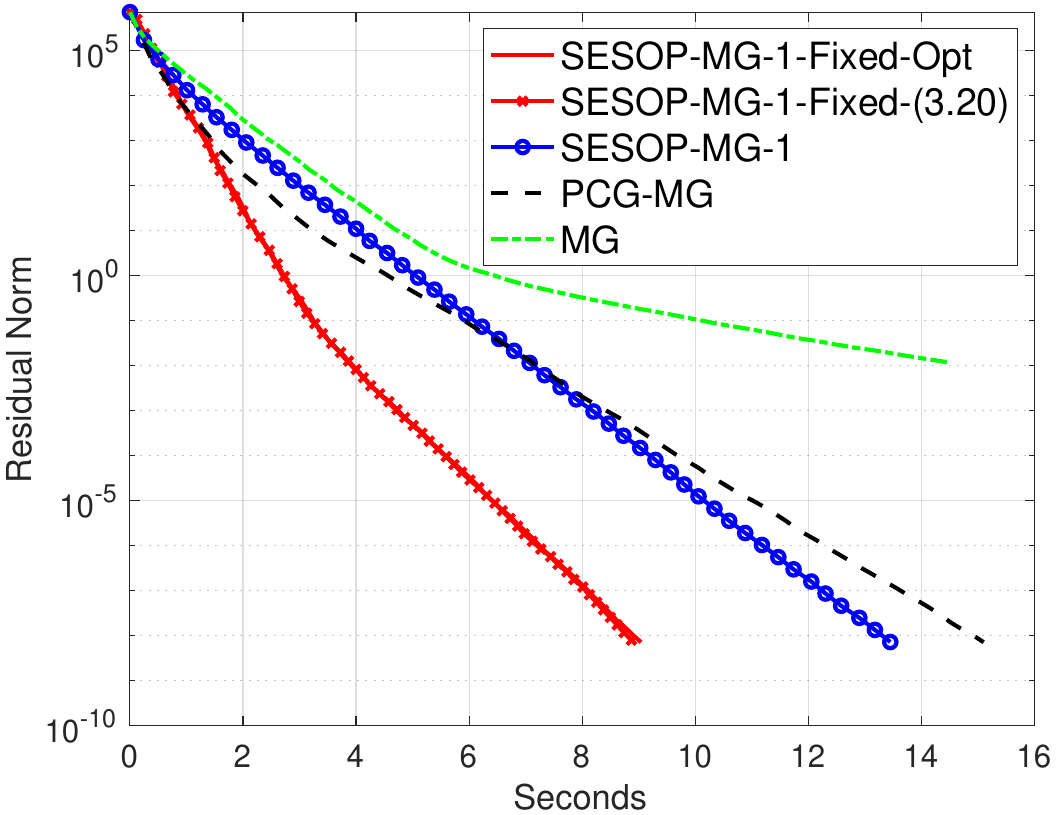}\label{fig:practical test:examII:resicpu}}
	\caption{Comparison of different methods for \eqref{eq:anis_diff_con:xy}. Test on $1024\times 1024$ grids and use the V-cycle.}
	\label{fig:practical test}
\end{figure}

\section{Conclusion}
\label{sec:conclusion}
In this paper, we merge multigrid (MG) optimization with SESOP optimization. The numerical experiments on linear and nonlinear problems illustrate the effectiveness and robustness of our scheme. Moreover, for linear problems, if only one history is used, we derive optimal fixed stepsizes that allow us to avoid the expensive subspace minimization and save computation. Specifically, for elliptic PDEs with constant coefficients, we propose two heuristic methods to estimate the fixed stepsizes based on LFA and smoothing analysis and the efficiency of these two methods is demonstrated in numerical tests.

\appendix

\section{Proof of \texorpdfstring{\Cref{lemma_1:lar:sma:ACF}}{TEXT}}\label{proof:lemma_1:lar:sma:ACF}
Consider $\hat r(c_1,c_{23},a_{\alpha})$ in \eqref{eq:worst_conv_rate:def} as a continuous function of a positive 
variable $a_{\alpha}\in \left[ a_{\alpha}^{min},a_{\alpha}^{max}\right]$, with fixed $c_1$ and $c_{23}$: 
$$
\hat r(c_1,c_{23},a_{\alpha})=\frac{1}{2}\left|b(c_1,c_{23},a_{\alpha})+\text{sgn}\left(b(c_1,c_{23},a_{\alpha})\right)\sqrt{b^2(c_1,c_{23},a_{\alpha})-4c_1}\right|.
$$
We distinguish between two regimes: (I): $b^2(c_1,c_{23},a_{\alpha}) < 4c_1$,  the square-root term is imaginary, and we simply get  $\hat r(c_1,c_{23},a_{\alpha})=\sqrt{c_1}$; (II): $b^2(c_1,c_{23},a_{\alpha}) \geq 4c_1$,the square-root term is real and the derivative of $\hat r(c_1,c_{23},a_{\alpha})$ with respect to $a_{\alpha}$ is
$$
\frac{\partial \hat r(c_1,c_{23},a_{\alpha})}{\partial a_{\alpha}}=-\frac{c_{23}}{2}\text{sgn}\left(b(c_1,c_{23},a_{\alpha})\right)\left[1+\frac{\left|b(c_1,c_{23},a_{\alpha})\right|}{\sqrt{b^2(c_1,c_{23},a_{\alpha})-4c_1}}\right].
$$
We ignore the irrelevant choice $c_{23} = 0$, for which the method is obviously not convergent. Using $b(c_1,c_{23},a_{\alpha}) = -c_{23}\left(a_{\alpha} -\frac{1+c_1}{c_{23}}\right)$, we get 
$$
\frac{\partial \hat r(c_1,c_{23},a_{\alpha})}{\partial a_{\alpha}}=\frac{|c_{23}|}{2}\text{sgn}\left(a_{\alpha} -\frac{1+c_1}{c_{23}}\right)\left[1+\frac{\left|b(c_1,c_{23},a_{\alpha})\right|}{\sqrt{b^2(c_1,c_{23},a_{\alpha})-4c_1}}\right].
$$
Notice that $\hat r(c_1,c_{23},a_{\alpha})$ is a symmetric function of $a_{\alpha}-(1+c_1)/c_{23}$ and it is furthermore convex because its derivative is strictly negative for $a_{\alpha} < (1+c_1)/c_{23}$ and positive for $a_{\alpha} > (1+c_1)/c_{23}$. It follows that, regardless of the sign of $b^2(c_1,c_{23},a_{\alpha})-4c_1$ throughout the regime $a_{\alpha} \in [a_{\alpha}^{min},a_{\alpha}^{max}]$, there exists no local maximum of $\hat r(c_1,c_{23},a_{\alpha})$. Hence,  $ \bar r(c_1,c_{23})=\max\left(\hat r(c_1,c_{23},a_{\alpha}^{min}),\hat r(c_1,c_{23}, a_{\alpha}^{max})\right).$ \qedsymbol
\section{Proof of \texorpdfstring{\Cref{lemma_2:determ:c_23}}{TEXT}}\label{proof:lemma_2:determ:c_23}
Consider $\hat r(c_1,c_{23},a_{\alpha})$ as a continuous function of a real variable $c_{23}$ with $c_1$ and $a_{\alpha}$ fixed. For $b^2(c_1,c_{23},a_{\alpha}) \geq 4c_1$, the derivative of $\hat r(c_1,c_{23},a_{\alpha})$ with respect to $c_{23}$  is
$$
\frac{\partial \hat r(c_1,c_{23},a_{\alpha})}{\partial c_{23}}=\frac{|a_{\alpha}|}{2}\text{sgn}\left(c_{23} -\frac{1+c_1}{a_{\alpha}}\right)\left[1+\frac{\left|b(c_1,c_{23},a_{\alpha})\right|}{\sqrt{b^2(c_1,c_{23},a_{\alpha})-4c_1}}\right].
$$
As in \Cref{lemma_1:lar:sma:ACF}, the meeting point of $\hat r(c_1, c_{23},a_{\alpha}^{max})$ and $\hat r(c_1, c_{23},a_{\alpha}^{min})$, which lies between $(1+c_1)/a_{\alpha}^{max}$ and $(1+c_1)/a_{\alpha}^{min}$, is where $\hat r(c_1,c_{23},a_{\alpha})$ is minimized with respect to $c_{23}$ and then the optimal $c_{23}$ enforces $\hat r(c_1, c_{23},a_{\alpha}^{max})=\hat r(c_1, c_{23},a_{\alpha}^{min})$ resulting in $1+c_1-c_{23}a_{\alpha}^{max} = -(1+c_1-c_{23}a_{\alpha}^{min}),$
leading to $c_{23}^* = \frac{2\left(1+c_1\right)}{a_{\alpha}^{min}+a_{\alpha}^{max}}.$  For $b^2(c_1,c_{23},a_{\alpha}) < 4c_1$, we have $\hat r(c_1,c_{23},a_{\alpha})=\sqrt{c_1}$ which is irrelevant to $c_{23}$. \qedsymbol
\section{Proof of \texorpdfstring{\Cref{lemma_3:determ:c_1}}{TEXT}}\label{proof:lemma_3:determ:c_1}
Consider two regimes: (I) $\mu^2(1+c_1)^2\leq 4c_1$; (II) $\mu^2(1+c_1)^2\geq 4c_1$. For (I), we have $\bar r_{c_{23}^*}(c_1)=\sqrt{c_1}$ and $c_1\in\left[c_1^-,c_1^+\right]$ where $c_1^\pm=\left(2-\mu^2\pm\sqrt{4-4\mu^2}\right)/\mu^2$ are the two solutions of $\mu^2(1+c_1)^2 = 4c_1$. Evidently, $c_1^-$ is the solution to minimize $r_{c_{23}^*}(c_1)$.

 For (II), we have $c_1\leq c_1^-~\text{or}~c_1\geq c_1^+$. We first note that $$\mu (1+c_1)+\sqrt{\mu^2(1+c_1)^2-4c_1}>0.$$ and then $\bar r_{c_{23}^*}(c_1)=\frac{1}{2}\left[\mu (1+c_1)+\sqrt{\mu^2(1+c_1)^2-4c_1}\right]$. Evidently, the derivative of $\bar r_{c_{23}^*}(c_1)$ with respect to $c_1$ is 
\e
\frac{\partial \bar r_{c_{23}^*}(c_1)}{\partial c_1}=\frac{1}{2}\left(\mu+\frac{\mu^2(1+c_1)-2}{\sqrt{\mu^2(1+c_1)^2-4c_1}}\right).\label{eq:worst_conv_rate:dete:c_1:derivative}
\ee
Note that $\frac{\mu^2(1+c_1)-2}{\sqrt{\mu^2(1+c_1)^2-4c_1}}$ is positive for $c_1>\frac{2}{\mu^2}-1$ and negative for $c_1<\frac{2}{\mu^2}-1$, hence, \eqref{eq:worst_conv_rate:dete:c_1:derivative} is positive for $c_1>c_1^+$, whereas for $c_1<c_1^-$ we have
$$
\begin{array}{rcl}
\frac{\partial \bar r_{c_{23}^*}(c_1)}{\partial c_1}&=&\frac{1}{2}\left(\mu-\sqrt{\frac{\left(\mu^2(1+c_1)-2\right)^2}{\mu^2(1+c_1)^2-4c_1}}\right)=\frac{\mu}{2}\left(1-\sqrt{\frac{\mu^4(1+c_1)^2-4\mu^2(1+c_1)+4}{\mu^4(1+c_1)^2-4\mu^2c_1}}\right)\\
&=&\frac{\mu}{2}\left(1-\sqrt{1+\frac{4(1-\mu^2)}{\mu^4(1+c_1)^2-4\mu^2c_1}}\right)<0
\end{array}
$$
The last inequality is due to the fact that that $0<\mu<1$ and $\mu^2(1+c_1)^2-4c_1>0$. Evidently, the optimal $c_1$ to minimize $\bar r_{c_{23}^*}(c_1)$ is either $c_1^-$ or $c_1^+$. However, $\bar r_{c_{23}^*}(c_1)>1$ for $c_1^+$ that $c_1^-=\left(2-\mu^2-\sqrt{4-4\mu^2}\right)/\mu^2$ is the solution. Substituting the expression of $\mu$ into $c_1^-$, we get the desired result.
\qedsymbol

\section{Proof of \texorpdfstring{\Cref{lem:ConditionNumber}}{TEXT}}\label{proof:lemma_3:idealProRest}
Any eigenvector $\va_i \in R(\mI_H^h)$ (respectively, $\va_i \notin \mathcal R(\mI_H^h))$) is an eigenvector of the CGC matrix $\mI_H^h\mA_H^{-1}(\mI_H^h)^\mathcal T \mA$, with eigenvalue 1 (respectively, 0), because if $\va_i \in \mathcal R(\mI_H^h)$ then it can be written as $\va_i = \mI_H^h \ve_j$ (that is, it is the $j$th column for some $j$), so    
$$
\left[ \mI_H^h\mA_H^{-1}(\mI_H^h)^\mathcal T \mA \right] \va_i = \mI_H^h\left[(\mI_H^h)^\mathcal T \mA \mI_H^h\right]^{-1}\left[(\mI_H^h)^\mathcal T \mA \mI_H^h \right] \ve_j = \mI_H^h \ve_j = \va_i,
$$
whereas if $\va_i \notin \mathcal R(\mI_H^h)$ then it is orthogonal to the columns of $\mI_H^h$, so
$$
\left[ \mI_H^h\mA_H^{-1}(\mI_H^h)^\mathcal T \mA \right] \va_i = \left[ \mI_H^h\mA_H^{-1}(\mI_H^h)^\mathcal T \right] a_i \va_i = \bm 0.
$$
It follows that the eigenvectors of $\mA_\alpha$ are $\va_i$, with eigenvalues given by 
$$
a_{\alpha}^i =  \left\{ \begin{array}{ll} \alpha a_i + 1 - \alpha & {\rm if}~ \va_i \in \mathcal R(\mI_H^h), \\ \alpha a_i & {\rm otherwise}. \end{array} \right. 
$$
By using the definition of $\kappa$, $a_{fmax}$, $a_{cmax}$, $a_{fmin}$, and $a_{cmin}$, the desired result is derived. \qedsymbol
\section{Proof of \texorpdfstring{\Cref{theorem:OptimalAlpha}}{TEXT}}\label{proof:theoremOptalpha:idealProRest}
To minimize $\kappa$, the optimal $\alpha$ should minimize the numerator and maximize the denominator. Note that $\alpha_{top}=\frac{1}{1+ a_{fmax}- a_{cmax}}$ and $\alpha_{bot}=\frac{1}{1+ a_{fmin}- a_{cmin}}$ are the ones to  minimize the numerator and  maximize the denominator, respectively. For $a_{fmax} -  a_{fmin} \geq  a_{cmax} -  a_{cmin}$, we have $\alpha_{top}\leq \alpha_{bot}$ and then:
\begin{enumerate}
\item
$\alpha \in(0,\alpha_{top}) \Rightarrow \kappa = \frac{\alpha  a_{cmax}+1-\alpha}{\alpha a_{fmin}},$ $\frac{d \kappa}{d \alpha} = -\frac{1}{\alpha^2 a_{fmin}} < 0,~\text{choosing}~\alpha =\alpha_{top}.$
\item
$\alpha\in(\alpha_{top},\alpha_{bot}) \Rightarrow \kappa = \frac{a_{fmax}}{a_{fmin}},~\text{choosing~any}~\alpha\in(\alpha_{top},\alpha_{bot}).$
\item
$\alpha\in(\alpha_{bot},1) \Rightarrow \kappa = \frac{\alpha  a_{fmax}}{\alpha a_{cmin}+1-\alpha}$, $\frac{d \kappa}{d \alpha} = \frac{ a_{fmax}}{(\alpha  a_{cmin}+1-\alpha)^2} > 0,~\text{choosing}~\alpha =\alpha_{bot}.$
\end{enumerate}
Evidently, the optimal $\alpha$ can be any one between $\alpha_{top}$ and $\alpha_{bot}$ yielding $\kappa_{opt}=\frac{a_{fmax}}{a_{fmin}}$. By applying the same reasoning to $a_{fmax} - a_{fmin} < a_{cmax} - a_{cmin}$, the optimal $\alpha$  achieves at $\alpha_{opt}\triangleqtemp\alpha_{bot}$. Summarizing, $\alpha_{bot}$ is the optimal solution for both cases. Substituting $\alpha_{opt}=\alpha_{bot}$ into \eqref{eq:Kappa}, we get the desired result.

\qedsymbol

\bibliographystyle{siamplain}
\bibliography{references}

\begin{thebibliography}{10}

\bibitem{borzi2009multigrid}
{\sc A.~Borz{\`\i} and V.~Schulz}, {\em Multigrid methods for {PDE}
  optimization}, SIAM {R}eview, 51 (2009), pp.~361--395.

\bibitem{brandt1977multi}
{\sc A.~Brandt}, {\em Multi-level adaptive solutions to boundary-value
  problems}, Mathematics of {C}omputation, 31 (1977), pp.~333--390.

\bibitem{Bra84}
{\sc A.~Brandt}, {\em 1984 multigrid guide with applications to fluid
  dynamics}.
\newblock Monograph, GMD-Studie 85, GMD-FIT, Postfach 1240, D-5205, St.
  Augustin 1, West Germany, 1985.

\bibitem{brandt2011multigrid}
{\sc A.~Brandt and O.~E. Livne}, {\em Multigrid {T}echniques: 1984 {G}uide with
  {A}pplications to {F}luid {D}ynamics, {R}evised {E}dition}, vol.~67, SIAM,
  2011.

\bibitem{briggs2000multigrid}
{\sc W.~L. Briggs, V.~E. Henson, and S.~F. McCormick}, {\em A multigrid
  tutorial}, SIAM, second~ed., 2000.

\bibitem{calandra2021high}
{\sc H.~Calandra, S.~Gratton, E.~Riccietti, and X.~Vasseur}, {\em On high-order
  multilevel optimization strategies}, SIAM Journal on Optimization, 31 (2021),
  pp.~307--330.

\bibitem{elad2007coordinate}
{\sc M.~Elad, B.~Matalon, and M.~Zibulevsky}, {\em Coordinate and subspace
  optimization methods for linear least squares with non-quadratic
  regularization}, Applied and Computational Harmonic Analysis, 23 (2007),
  pp.~346--367.

\bibitem{falgout2005two}
{\sc R.~D. Falgout, P.~S. Vassilevski, and L.~T. Zikatanov}, {\em On two-grid
  convergence estimates}, Numerical {L}inear {A}lgebra with {A}pplications, 12
  (2005), pp.~471--494.

\bibitem{gratton2010numerical}
{\sc S.~Gratton, M.~Mouffe, A.~Sartenaer, P.~L. Toint, and D.~Tomanos}, {\em
  Numerical experience with a recursive trust-region method for multilevel
  nonlinear bound-constrained optimization}, Optimization Methods \& Software,
  25 (2010), pp.~359--386.

\bibitem{gratton2008recursiveIMA}
{\sc S.~Gratton, M.~Mouffe, P.~L. Toint, and M.~Weber-Mendon{\c{c}}a}, {\em A
  recursive $\ell_\infty$-trust-region method for bound-constrained nonlinear
  optimization}, IMA Journal of Numerical Analysis, 28 (2008), pp.~827--861.

\bibitem{gratton2008recursive}
{\sc S.~Gratton, A.~Sartenaer, and P.~L. Toint}, {\em Recursive trust-region
  methods for multiscale nonlinear optimization}, SIAM Journal on Optimization,
  19 (2008), pp.~414--444.

\bibitem{hemker1990order}
{\sc P.~Hemker}, {\em On the order of prolongations and restrictions in
  multigrid procedures}, Journal of Computational and Applied Mathematics, 32
  (1990), pp.~423--429.

\bibitem{lewis2005model}
{\sc R.~M. Lewis and S.~G. Nash}, {\em Model problems for the multigrid
  optimization of systems governed by differential equations}, SIAM Journal on
  Scientific Computing, 26 (2005), pp.~1811--1837.

\bibitem{narkiss2005sequential}
{\sc G.~Narkiss and M.~Zibulevsky}, {\em Sequential subspace optimization
  method for large-scale unconstrained problems}, Technion-IIT, Department of
  Electrical Engineering, 2005.

\bibitem{nash2000multigrid}
{\sc S.~G. Nash}, {\em A multigrid approach to discretized optimization
  problems}, Optimization Methods and Software, 14 (2000), pp.~99--116.

\bibitem{nesterov2018lectures}
{\sc Y.~Nesterov}, {\em Lectures on convex optimization}, vol.~137, Springer,
  2018.

\bibitem{nocedal2006numerical}
{\sc J.~Nocedal and S.~Wright}, {\em Numerical optimization}, Springer Science
  \& Business Media, 2006.

\bibitem{oosterlee2000krylov}
{\sc C.~W. Oosterlee and T.~Washio}, {\em Krylov subspace acceleration of
  nonlinear multigrid with application to recirculating flows}, SIAM Journal on
  Scientific Computing, 21 (2000), pp.~1670--1690.

\bibitem{schmidt2005minfunc}
{\sc M.~Schmidt}, {\em min{F}unc: unconstrained differentiable multivariate
  optimization in {M}atlab}, Software available at
  http://www.cs.ubc.ca/$\sim$schmidtm/Software/minFunc.htm,  (2005).

\bibitem{stuben2001introduction}
{\sc K.~St\"{u}ben}, {\em Algebraic multigrid ({AMG}): an introduction with
  applications}, in Multigrid, U.~Trottenberg, C.~W. Oosterlee, and
  A.~Schuller, eds., Academic Press, 2001.

\bibitem{toint2009multilevel}
{\sc P.~L. Toint, D.~Tomanos, and M.~Weber-Mendonca}, {\em A multilevel
  algorithm for solving the trust-region subproblem}, Optimization Methods \&
  Software, 24 (2009), pp.~299--311.

\bibitem{trottenberg2000multigrid}
{\sc U.~Trottenberg, C.~W. Oosterlee, and A.~Schuller}, {\em Multigrid},
  Academic Press, 2000.

\bibitem{wen2009line}
{\sc Z.~Wen and D.~Goldfarb}, {\em A line search multigrid method for
  large-scale nonlinear optimization}, SIAM Journal on Optimization, 20 (2009),
  pp.~1478--1503.

\bibitem{wienands2004practical}
{\sc R.~Wienands and W.~Joppich}, {\em Practical Fourier analysis for multigrid
  methods}, CRC press, 2004.

\bibitem{xu2017algebraic}
{\sc J.~Xu and L.~Zikatanov}, {\em Algebraic multigrid methods}, Acta Numerica,
  26 (2017), pp.~591--721.

\bibitem{yavneh1998coarse}
{\sc I.~Yavneh}, {\em Coarse-grid correction for nonelliptic and singular
  perturbation problems}, SIAM Journal on Scientific Computing, 19 (1998),
  pp.~1682--1699.

\bibitem{yavneh1998multigrid}
{\sc I.~Yavneh and E.~Olvovsky}, {\em Multigrid smoothing for symmetric
  nine-point stencils}, Applied Mathematics and Computation, 92 (1998),
  pp.~229--246.

\bibitem{zibulevsky2013speeding}
{\sc M.~Zibulevsky}, {\em Speeding-up convergence via sequential subspace
  optimization: Current state and future directions}, arXiv preprint
  arXiv:1401.0159,  (2013).

\bibitem{zibulevsky2010l1}
{\sc M.~Zibulevsky and M.~Elad}, {\em L1-l2 optimization in signal and image
  processing}, IEEE Signal Processing Magazine, 27 (2010), pp.~76--88.

\end{thebibliography}

\end{document}


\maketitle

\section{A detailed example}

Here we include some equations and theorem-like environments to show
how these are labeled in a supplement and can be referenced from the
main text.
Consider the following equation:
\begin{equation}
  \label{eq:suppa}
  a^2 + b^2 = c^2.
\end{equation}
You can also reference equations such as \cref{eq:matrices,eq:bb} 
from the main article in this supplement.

\lipsum[100-101]

\begin{theorem}
  An example theorem.
\end{theorem}

\lipsum[102]
 
\begin{lemma}
  An example lemma.
\end{lemma}

\lipsum[103-105]

Here is an example citation: \cite{KoMa14}.

\section[Proof of Thm]{Proof of \cref{thm:bigthm}}
\label{sec:proof}

\lipsum[106-112]

\section{Additional experimental results}
\Cref{tab:foo} shows additional
supporting evidence. 

\begin{table}[htbp]
{\footnotesize
  \caption{Example table}  \label{tab:foo}
\begin{center}
  \begin{tabular}{|c|c|c|} \hline
   Species & \bf Mean & \bf Std.~Dev. \\ \hline
    1 & 3.4 & 1.2 \\
    2 & 5.4 & 0.6 \\ \hline
  \end{tabular}
\end{center}
}
\end{table}

\bibliographystyle{siamplain}
\bibliography{references}